\def\R{\Bbb R}
\def\si{\sigma}
\def\cal{\mathcal}
\def\co{{\cal O}}
\def\Q{\Bbb Q}
\def\part{\partial}
\def\we{\wedge}
\def\e{\epsilon}
\def\dis{\displaystyle}
\def\P{\mathbb P}
\def\C{\mathbb C}
\def\s{{\square^n}}
\def\id{\operatorname{id}}
\def\codim{{{\rm codim}\,}}
\def\p1{\prec}
\def\<{\langle}
\def\>{\rangle}
\def\id{\operatorname{id}}
\newcommand{\sign}{\operatorname{sign}}
\newcommand{\Alt}{\operatorname{Alt}}
\newtheorem{theorem}{Theorem}[section] 
\newtheorem{proposition}[theorem]{Proposition} 
\newtheorem{definition}[theorem]{Definition} 
\newtheorem{remark}[theorem]{Remark} 
\newtheorem{lemma}[theorem]{Lemma} 
\newtheorem{lemma-definition}[theorem]{Lemma-Definition} 
\newtheorem{proposition-definition}[theorem]{Proposition-Definition}
\newcommand{\CC}{{\mathbb{C}}}
\newcommand{\PP}{{\mathbb{P}}}
\newcommand{\QQ}{{\mathbb{Q}}}
\newcommand{\ZZ}{{\mathbb{Z}}}
\newcommand\Spec{\operatorname{Spec}}
\newcommand{\al}{\alpha}  \newcommand{\ga}{\gamma} 
 \newcommand{\eps}{\epsilon} 
\newcommand{\Ga}{\Gamma}
\newcommand{\sq}{\square} 
\newcommand{\ot}{\otimes}
\newcommand{\mapright}[1]{%
  \smash{\mathop{%
    \hbox to 1cm{\rightarrowfill}}\limits^{#1} } } 
\newcommand{\smapr}[1]{%
  \smash{\mathop{%
    \hbox to 0.5cm{\rightarrowfill}}\limits^{#1} } } 
\newcommand{\maprb}[1]{%
  \smash{\mathop{%
    \hbox to 1cm{\rightarrowfill}}\limits_{#1} } } 
\newcommand{\mapleft}[1]{%
  \smash{\mathop{%
    \hbox to 1cm{\leftarrowfill}}\limits^{#1} } }
\newcommand{\maplb}[1]{%
  \smash{\mathop{%
    \hbox to 1cm{\leftarrowfill}}\limits_{#1} } }
\def\alt{\operatorname{alt}}
\begin{document}

\title{An application of a Hodge realization of Bloch-Kriz mixed Tate motives} 
\author{Kenichiro Kimura }

\maketitle
\begin{abstract} Beilinson and Deligne proved a weak version of Zagier's conjucture on special values of Dedekind
zeta functions assuming the existence of a category of mixed Tate motives which has certain properties.
We show that Bloch-Kriz category of mixed Tate motives together with a Hodge realization constructed in
\cite{part II} has the required properties.

\end{abstract}

\vskip 0.3cm

\noindent{\it MSC}:  19E15 (Primary),  11R42 (Secondary).

\setcounter{tocdepth}{3}


\thispagestyle{empty}
\setcounter{tocdepth}{1}

\section{Introduction}
In \cite{BD} a weak version of Zagier's conjecture on  special values of Dedekind
zeta functions is proved under certain assumptions which are stated bellow.
Let $F$ be a number field. 
\begin{itemize}
\item[(A)]  There exists  a Tannakian category $\cal T(F)$ over $\Q$.
\item[(B)]  There exists an object $\Q(1)$ of rank one on $\cal T(F)$.  Let $\Q(k)=\Q(1)^{\otimes k}$ for $k\in \ZZ.$
The simple objects of $\cal T(F)$ are $\Q(k)$ for $k\in \ZZ.$  They are pairwise non-isomorphic, and 
satisfy the identities
\[\text{Ext}^1(\Q(0),\Q(n))=0\,\,\text{ for }\,\,n\leq 0.\]
  There are isomorphisms
\[\text{Ext}^1(\Q(0),\Q(k))=K_{2k-1}(F)\otimes \Q\]
for $k\geq 1.$

\item[(C)]  For each embedding $\si$ of $F$ into $\CC$,  there exists an
exact tensor functor ``Hodge realization''  from $\cal T(F)$ to the category
of $\Q$-mixed Hodge structures.  One has real$(\Q(1))=\Q(1)$.

\item[(D)] The Hodge realization is compatible with
the regulator maps on  $K_{2k-1}(F)$ for $k\geq 1$.

\item[(E)]  For $a\in F-\{0,1\}$, there exists a projective system $M_k(a)$
of extensions of $\Q(0)$ by $\text{Sym}^{k-1}([a])(1)$ with
$M_1(a)=[1-a]$,  the Hodge realizations of which are the ones described in 
\cite{BD}.
\end{itemize}
In \cite{BK} a certain Tannakian category of mixed Tate motives over $F$ 
is constructed for any field $F$. This category is denoted by ${\rm MT_{BK}}$ in this paper.
They also construct Polylog objects of ${\rm MT_{BK}}$ which should 
satisfy the condition (E) if $F$ is a subfield of $\CC$.  In \cite{part II} we construct 
a Hodge realization functor of ${\rm MT_{BK}}$ when $F$ is a subfield of $\CC$.  The purpose of
this paper is to show that the category ${\rm MT_{BK}}$,  our Hodge realization and the Polylog 
objects of ${\rm MT_{BK}}$   satisfy the the conditions (A) to (E).

We should remark on a Hodge realization of ${\rm MT_{BK}}$ described in \cite{BK}.  There a Hodge
realization functor of ${\rm MT_{BK}}$  in terms  of a certain complex of topological chains
$\cal D \cal P$ is described. They do not define $\cal D \cal P$ explicitly,
but the properties which $\cal D \cal P$ should have are stated, and some explicit
chains which should be in $\cal D \cal P$ are described.  The Hodge realization of
Polylog motives is computed in terms of those ``should be'' elements of $\cal D \cal P$.
In this sense the Hodge realization of ${\rm MT_{BK}}$ in terms of $\cal D \cal P$
is not complete.  

\noindent The construction of  our Hodge realization is inspired by the one 
in terms of $\cal D \cal P$. In \cite{part II} we explicitly define a certain complex
$AC^\bullet$ of topological chains and prove that it has the necessary  properties.
In \cite{Ki} we give a description of the Abel-Jacobi maps on higher Chow cycles
in terms of $AC^\bullet$.  The compatibility of the Hodge realization  with
the regulator map follows from this. 

For a field $F$  there exists a triangulated category ${\rm DTM}(F)$ of mixed Tate motives in ${\rm DM_{gm}}(F)$
of Voevodsky's category.  When $F$ satisfies  the Beilinson-Soul\'{e} conjectures e.g. F is a number field,
Levine \cite{Lev2} defines a $t$-structure on ${\rm DTM} (F)$, and the heart ${\rm MT}(F)$ is an abelian category.
By a result of Spitzweck \cite{Sp} (see \cite{Lev3} Section 5 for an account)  it can be shown
that the category ${\rm MT_{BK}}$ is equivalent to ${\rm MT}(F)$.  The compatibility of our Hodge
realization of $\rm MT_{BK}$ and that of ${\rm MT}(F)$ is still to be proved.

For recent progress on Zagier's conjecture see for example \cite{Go}.

The organization of this paper is as follows.  In Section \ref{MT}  we give an overview
of the category ${\rm MT_{BK}}$.  We see that ${\rm MT_{BK}}$ satisfies the conditions (A) and (B).
In Section \ref{Hodger} we will give an overview of our Hodge realization of ${\rm MT_{BK}}$.
The conditions (C) and (D) are shown to hold.  Finally in Section \ref{polylog}
we recall the definition of Polylog motive from \cite{BK} and compute its
Hodge realization.  The condition (E) will be shown to hold.

\section{The category ${\rm MT_{BK}}$}
\label{MT}
In this section we give an overview of the category of mixed Tate motives
${\rm MT_{BK}}$ which is constructed in
\cite{BK}.   We will see that ${\rm MT_{BK}}$ satisfies conditions (A) and (B).
\subsection{Cycle complexes and a cdga $N$}

Let $F$ be a field.    
Following \cite{BK}, we recall that the cycle complex of $\Spec {F} $ may be viewed as a 
commutative differential graded algebra (cdga) over $\QQ$. 

Bloch defined the cycle complex for any quasi-projective variety, but we will restrict to the
 case of $\Spec {F}$. 
Let $\sq^n=\sq^n_{F}=(\PP^1_{F} -\{1\})^n$, which is isomorphic to the affine $n$-space 
as a variety.
Let  $(z_1, \cdots, z_n)$ be the inhomogeneous coordinates of $\sq^n$.
For $i=1,\cdots,n$ and $\al=0,\infty$,
we denote by $H_{i,\al}$ the subvariety of $\square^n$ defined by $z_i=\al$.  The intersection of several $H_{i,\al}$'s is called a {\it cubical face}.

For $n\ge 0$ and $r\ge 0$, let $Z^r(\Spec F, n)$\index{$Z(r, n)$} be the $\QQ$-vector space with basis irreducible 
closed subvarieties of $\sq^n$ of codimension 
$r$ which meet all the cubical faces properly.   
The cubical differential $\partial: Z^r(\Spec F, n)\to Z^r(\Spec F, n-1)$ is defined by the equality
\begin{equation}
\label{total face map definition} 
\part =\sum_{i=1}^n (-1)^{i-1}
(\part_{i,0}-\part_{i,\infty})
\end{equation}
where $\part_{i,\al}$ is the intersection with the cubical face $H_{i,\al}$. The group $G_n= \{\pm 1\}^n\rtimes S_n$\index{$G_n$} acts naturally on  $\s$ as follows. The subgroup
$\{\pm 1\}^n$ acts by the inversion of the coordinates $z_i$, and  the symmetric group $S_n$ acts by
permutation of $z_i$'s. This action induces an action of  $G_n$ on $Z^r(\Spec F, n)$.
Let $\sign: G_n \to \{\pm 1\}$ be the character which sends 
$(\eps_1, \cdots, \eps_n; \sigma)$ to $\eps_1\cdot \cdots\cdot\eps_n\cdot\sign(\sigma)$. 
The idempotent $\Alt=\Alt_n:=({1}/{|G_n|})\sum_{g\in G_n} \sign(g) g $ \index{$\Alt$}
in the group ring $\QQ[G_n]$ is called the alternating 
projector. For a $\Q[G_n]$-module M, 
the submodule
$$
M^{\alt}=\{\al \in M\mid \Alt \al=\al\}=\Alt(M)
$$
\index{$M^{\alt}$}
is called the alternating part of $M$. By Lemma 4.3 \cite{BK}, one knows that the cubical differential
$\part $ maps $Z^r(\Spec F,n)^{\rm alt}$ to $Z^r(\Spec F,n-1)^{\rm alt}$. 
For convenience let $ Z^r(\Spec {F}, n)=0$ if $n<0$. 
Product of cycles induces a map of complexes 
$\times : Z^r(\Spec F, n)\otimes Z^s(\Spec F, m)\to Z^{r+s}(\Spec F, n+m)$, $z\otimes w\mapsto z\times w$. 
This induces a map of complexes on alternating parts 
$$
Z^r(\Spec {F}, n)^{\rm alt}\otimes Z^s(\Spec {F}, m)^{\alt}\to Z^{r+s}(\Spec {F}, n+m)^{\rm alt}
$$
given by $z\otimes w\mapsto z\cdot w=\Alt (z\times w)$.
We set $N_r^i= Z^r(\Spec {F}, 2r-i)^{\rm alt}$ for $r\ge 0$ and $i\in \ZZ$
\index{$N_r^i$, $N^\bullet_r$}.


One thus has an associative product map
$$N_r^i\otimes N_s^j\to N_{r+s}^{i+j}\,,\qquad z\otimes w\mapsto z\cdot w\,,$$
One verifies that the product is 
graded-commutative:
$w\cdot z=(-1)^{ij}z\cdot w$ for $z\in N_r^i$ and $w\in N_s^j$. 

Let $\dis N=\underset{r\ge 0,\, i\in \ZZ}\oplus  N_r^i$, and $N_r=\underset{i\in \ZZ}\oplus N_r^i$. 
 The algebra $N$ with the above product and the differential $\part$
makes it a commutative differential graded algebra (cdga) over $\QQ$.

\noindent By definition, one has $N_0=\QQ$ and $1\in N_0$ is the unit for the product. 
Thus the projection $\epsilon : N\to N_0=\QQ$\index{augmentation $\epsilon$} is an augmentation, namely it is a map of cdga's and the composition 
with the unit map $\QQ\to N$ is the identity. Let  $r$ be a non-negative integer  and  $p$ be an integer. Then
by \cite{BK} Proposition 5.1 we have the folloing isomorphism.
\begin{equation}
\label{cohomN}
H^p({N_r}^\bullet)\simeq CH^r(\Spec F, 2r-p)=H^p_M (\Spec F, \Q(r)).
\end{equation}

\subsection{The bar complex}
\label{bar complex}
Let $M$ (resp. $L$) be a complex which is a differential left $N$-module  (resp. right  $N$-module).  We recall the definition
of the bar complex $B(L, N, M)$ (see \cite{EM}, \cite{Ha} for details.) 

Let $N_+= \oplus_{r>0} N_r$.  As a module, $B(L, N,M)$\index{$B(L, N,M)$} 
is equal to 
$L\otimes \bigl(\bigoplus_{s\ge 0} (\otimes ^s N_+)\bigr)\otimes M$, 
with the convention $(\otimes ^s N_+)=\QQ$ for $s=0$. 
An element $l\otimes (a_1\otimes \cdots \otimes a_s) \otimes m$  of 
 $L\otimes  (\otimes ^s N_+)\otimes M$ 
  is written as
$l[a_1|\cdots |a_s] m$ (for $s=0$, we write $l[\,\,]m$ for $l\otimes 1\otimes m $ in $L\otimes \QQ\otimes M$ ).

 The internal differential $d_I$ is defined by
\begin{align*}
 d_I&(l[a_1|\cdots |a_s] m)\\
=&dl  [a_1|\cdots |a_s]m\\
+&\sum_{i=1}^{s}
(-1)^i Jl[Ja_1|\cdots |Ja_{i-1}|da_{i}|\cdots |a_s] m
+(-1)^sJl[Ja_1|\cdots |Ja_s] dm
\end{align*}
where
$Ja=(-1)^{\deg a}a$.
The external differential $d_E$ is defined by
\begin{align*}
 d_E&( l [a_1|\cdots |a_s] m)\\
=&-(Jl)a_1[a_2|\cdots |a_s] m\\
+&\sum_{i=1}^{s-1}
(-1)^{i+1}Jl[Ja_1|\cdots |(Ja_{i})a_{i+1}|\cdots |a_s] m
\\
+&(-1)^{s-1}Jl[Ja_1|\cdots |Ja_{s-1}] a_sm.
\end{align*}
Then we have $d_Id_E+d_Ed_I=0$ and the map $d_E+d_I$ defines
a differential on $B(L, N,M)$. The degree of an element
$l[a_1|\cdots |a_s] m$
is defined to be $\sum_{i=1}^s \deg a_i +\deg l+\deg m-s$. 

If $L=\QQ$ and the right $N$-module structure is given by the augmentation
$\epsilon$,
the complex $B(L,N,M)$ is denoted by $B(N,M)$
and omit the first factor ``$1\otimes$''.
If $L=M=\Q$ with the $N$-module structure given by the augmentation $\e$,
we set 
$$
B(N):=B(\Q,N, \Q).
$$
We omit the first and the last tensor factor ``$1\otimes$'' and
``$\otimes 1$'' for
an element in $B(N)$. \index{$B(N), B(N)_r$}

The complex $B(N)$ is graded by non-negative integers as a complex: $B(N)=\oplus_{r\ge 0}B(N)_r$  where 
$B(N)_0 = \QQ$
and  for $r > 0$
$$
B(N)_r=\bigoplus_{r_1+\cdots +r_s=r,\,r_i> 0}
N_{r_1}\otimes \cdots \otimes N_{r_s}.
$$
Let $\Delta:B(N)\to B(N)\otimes B(N)$
\index{$\Delta:B(N)\to B(N)\otimes B(N)$}
be the map given by
$$
\Delta([a_1|\cdots |a_s])
=
\sum_{i=0}^s\big([a_1|\cdots |a_i]\big)\otimes
\big([a_{i+1}|\cdots |a_s]\big).
$$
and $e : B(N) \to \QQ$ be the projection to $B(N)_0$ . These are maps of complexes, and they satisfy
coassociativity $(\Delta \otimes 1)\Delta = (1\otimes \Delta)\Delta$ and counitarity 
$(1\otimes e)\Delta = (e\otimes 1)\Delta = \text{id}$. In other words
$\Delta$ is a coproduct on $B(N)$ with counit $e$.
In addition, the shuffle product (see e.g. \cite{EM}) makes $B(N)$ a
differential graded algebra with unit $\QQ = B(N)_0\subset B(N)$. The shuffle product is graded-commutative. Further, the
maps $\Delta$ and $e$ are compatible with product and unit. We summarize:
\begin{enumerate}
\item
$B(N) =
\oplus_{r\geq 0} B(N)_r$
is a differential graded bi-algebra over $\QQ$. (It follows that $B(N)$ is a differential graded Hopf algebra, since it
is a fact that antipode exists for a graded bi-algebra.)
\item
$B(N) =\oplus_{r\geq 0} B(N)_r$
is a direct sum decomposition to subcomplexes, and product, unit, co-product and counit are compatible with this decomposition.
\item
The product is graded-commutative with respect to the cohomological degree which we denote by $i$.
\end{enumerate}
With due caution one may say that $B(N)$ is an ``Adams graded'' differential graded Hopf algebra over $\QQ$, with graded-
commutative product; the first ``Adams grading'' refers to $r$, and the second grading refers to $i$, while
graded-commutativity of product is with respect to the grading $i$ (the product is neither graded-commutative or commutative with respect to $r$). We recall that graded Hopf algebra in the
literature means a graded Hopf algebra with graded-commutative product, so our $B(N)$ is a graded
Hopf algebra in this sense with respect to the grading $i$, but is not one with respect to the
``Adams grading'' $r$.
In the following
we will denote $H^0(B(N))$ by $\chi_N$.
The product, unit, coproduct, counit on $B(N)$ induce the corresponding
maps on ${\chi_N}$, hence ${\chi_N}$ is an ``Adams graded'' Hopf algebra over $\QQ$ in the following sense:
\begin{enumerate}
\item
${\chi_N}$ is a Hopf algebra over $\QQ$.
\item
With $({\chi_N})_r := H^0 (B(N)_r )$, one has 
a direct sum decomposition to subspaces  ${\chi_N}=\oplus_{r \geq 0} ({\chi_N})_r$;
the product, unit, coproduct and counit are compatible with this decomposition. 
\end{enumerate}

We also have the coproduct map 
$\Delta:H^0(B(\Q,N, M))\to \chi_N \otimes H^0(B(\Q,N, M))$
obtained from the homomorphism of complexes
$\Delta:B(\Q,N, M)\to B(N)\otimes B(\Q,N, M)$
given by
$$
\Delta([a_1|\cdots |a_s] m)
=
\sum_{i=0}^s\big([a_1|\cdots |a_i]\big)\otimes
\big([a_{i+1}|\cdots |a_s] m\big).
$$

We define the category of mixed Tate motives after Bloch-Kriz \cite{BK}. 
\begin{definition}[Adams graded $\chi_N$-co-modules, 
mixed Tate motives\index{mixed Tate motives}, \cite{BK}]

\begin{enumerate}
\item
Let 
$$V=\bigoplus_iV_i$$ be  a $\Q$-vector space of finite dimension with an Adams grading.
A linear map
$$
\Delta_V:V \to V\otimes {\chi_N}
$$
is called a  coaction of ${\chi_N}$
if the following conditions hold.
\begin{enumerate}
\item
$\Delta_V(V_i)\subset\oplus_{p+q=i}V_p\otimes ({\chi_N})_q$.
\item
(Coassociativity) 
The following diagram commutes.
$$
\begin{matrix}
V &\xrightarrow{\Delta_V} &V\otimes {\chi_N} \\
\Delta_V\downarrow \phantom{\Delta_V}& & \phantom{id_V\otimes \Delta_{{\chi_N}}}\downarrow 
{\rm id}_V\otimes \Delta_{{\chi_N}}
  \\
V\otimes {\chi_N} & \xrightarrow{\Delta_V\otimes {\rm id}_{{\chi_N}}} & V\otimes {\chi_N} \otimes {\chi_N}
\end{matrix}
$$
\item
(Counitarity)
The composite
$$
V \overset{\Delta_V}\to V\otimes {\chi_N} \xrightarrow{{\rm id}_V\otimes e} V,
$$
is the identity map,
where $e$ is the counit of ${\chi_N}$.
\end{enumerate}
An Adams graded vector space $V$ with a coaction $\Delta_V$ of ${\chi_N}$
 is called a right co-module
over ${\chi_N}$. For a right co-modules $V$, $W$ over ${\chi_N}$, a linear map
$V\to W$ is called a homomorphism of  co-modules over 
${\chi_N}$ if it preserves the Adams gradings 
and the coactions of ${\chi_N}$.
The category of  right co-modules over ${\chi_N}$ is denoted by 
co-${\rm rep}^f(\chi_N)$.  
\index{$(\operatorname{Com}^{gr}_{H^(B(N))})$}
\item
The category of mixed Tate motives 
${\rm MT_{BK}}$ 
\index{$({MT_{BK}}), ({MT_{BK}}_{F})$} 
over  $\Spec(F)$
is defined as the category 
co-${\rm rep}^f(\chi_N)$
of  right co-modules over ${\chi_N}$.  For an integer 
$r$, the Tate object
$\Q(r)$ is a  one-dimensional $\Q$-vector space of Adams degree $-r$, with trivial coaction
of $\chi_N$.
\end{enumerate}
\end{definition}
\begin{proposition}
\label{Kerd}
Let $V$ be an object of ${\rm MT_{BK}}$. Then the kernel of the map
\[\Delta_{V}\otimes \id- \id\otimes \Delta_{\chi_N}:\,\,
V\otimes \chi_N\to V\ot \chi_N\otimes \chi_N
\]
is equal to $\Delta_V(V)$.  Hence it is isomorphic to $V$ as a $\Q$-vector space by the counitarity
of $\Delta_V$.
\end{proposition}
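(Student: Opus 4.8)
The plan is to prove the two inclusions $\Delta_V(V)\subseteq\ker$ and $\ker\subseteq\Delta_V(V)$ separately; the first is a direct consequence of coassociativity of the coaction, and the second is obtained by exhibiting, for each kernel element, an explicit preimage under $\Delta_V$ built from the counit of $\chi_N$. For the first inclusion I would simply note that for $v\in V$ the commuting coassociativity square in the definition of a coaction gives $(\Delta_V\otimes\id_{\chi_N})(\Delta_V(v))=(\id_V\otimes\Delta_{\chi_N})(\Delta_V(v))$, so $\Delta_V(v)$ lies in the kernel of $\Delta_V\otimes\id-\id\otimes\Delta_{\chi_N}$.

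For the reverse inclusion, write an element $x$ of the kernel as $x=\sum_i v_i\otimes c_i$ with $v_i\in V$ and $c_i\in\chi_N$, so that by hypothesis $\sum_i\Delta_V(v_i)\otimes c_i=\sum_i v_i\otimes\Delta_{\chi_N}(c_i)$ in $V\otimes\chi_N\otimes\chi_N$. I would then set $y:=(\id_V\otimes e)(x)=\sum_i e(c_i)\,v_i\in V$, where $e\colon\chi_N\to\Q$ is the counit, and check that $\Delta_V(y)=x$. This follows by applying $\id_V\otimes\id_{\chi_N}\otimes e$, that is, the counit on the last tensor factor, to both sides of the displayed identity: the left side becomes $\sum_i e(c_i)\,\Delta_V(v_i)=\Delta_V(y)$, while the right side becomes $\sum_i v_i\otimes(\id_{\chi_N}\otimes e)\bigl(\Delta_{\chi_N}(c_i)\bigr)=\sum_i v_i\otimes c_i=x$ by counitarity of $\chi_N$. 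Hence $x=\Delta_V(y)\in\Delta_V(V)$, proving $\ker\subseteq\Delta_V(V)$.

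Finally, for the concluding sentence of the statement, the counitarity axiom of the coaction says that $(\id_V\otimes e)\circ\Delta_V=\id_V$, so $\Delta_V$ is injective and therefore restricts to an isomorphism of $\Q$-vector spaces $V\xrightarrow{\sim}\Delta_V(V)=\ker$. I do not expect a genuine obstacle here: the statement is the familiar description of a comodule as the equalizer of the two natural maps $V\otimes\chi_N\rightrightarrows V\otimes\chi_N\otimes\chi_N$, valid over any field, and the only thing requiring care is the bookkeeping --- tracking which tensor slot the counit is applied to and using the identifications $V\otimes\Q\cong V$ and $\chi_N\otimes\Q\cong\chi_N$ consistently.
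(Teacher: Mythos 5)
Your proof is correct, and it takes a genuinely shorter route than the paper. The paper proceeds by induction on the length of the Adams filtration: it truncates $V$ at Adams degree $m$, forms the short exact sequence $0\to V_{\leq m}\to V\to V/V_{\leq m}\to 0$, sets up a commutative ladder with the maps $D_1$, $D_2$, $D_3$, and compares kernels. The base case ($V=V_0$ with trivial coaction) is reduced to the claim that the kernel of $1\otimes\id-\Delta_{\chi_N}\colon\chi_N\to\chi_N\otimes\chi_N$ is $(\chi_N)_0=\Q$, which is itself verified by applying the counit to one tensor slot. Your argument skips all of this and applies the counit $\id_V\otimes\id_{\chi_N}\otimes e$ directly to the defining identity for an element of the kernel, recovering the element as $\Delta_V\bigl((\id_V\otimes e)(x)\bigr)$ in one step. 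This is the standard equalizer description of a comodule; it is shorter, does not use the Adams grading or finite-dimensionality of $V$, and in fact subsumes the counit manipulation that the paper uses only in its base case. Both proofs are valid; yours is the cleaner and more general one, while the paper's inductive route is more in keeping with the filtration arguments used elsewhere in the text (e.g.\ in the weight spectral sequence computations).
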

\begin{proof}
By shifting the Adams grading of $V$ if necessary, we may assume that $V$ is of the form
 $\displaystyle \oplus_{i=0}^m V_i$. We prove the assertion by induction on $m$. When $m=
 0$, the coaction $\Delta_V$ is trivial since $\displaystyle \Delta_V(V)\subset  
 V\otimes(\chi_N)_0=V$, and the counitarity holds for $\Delta_V$. The assertion follows
 from the fact that the kernel of the map
 \[1\otimes \text{id}-\Delta_{\chi_N}:\,\,\chi_N\to \chi_N\otimes \chi_N\]
 is equal to $(\chi_N)_0=\Q$. Suppose that the assertion holds
 for $m$ and consider the case of $m+1$. Let $V_{\leq m}=\oplus_{i=0}^m V_i$. This is a sub comodule
 of $V$ and we have an obvious exact sequence
 \begin{equation}
 \label{basexact}
 0\to V_{\leq m}\to V\to V/ V_{\leq m}\to 0.
 \end{equation}
 Consider the commutative diagram with exact rows
  $$
\begin{matrix}
0&\rightarrow& V_{\leq m}\otimes \chi_N&\rightarrow &V\otimes \chi_N&\rightarrow& V/ V_{\leq m}\otimes \chi_N&\rightarrow& 0\\
&& D_1\downarrow \phantom{D_1}&&  D_2\downarrow\phantom{D_2} && D_3\downarrow\phantom{D_3} &&\\
0&\rightarrow& V_{\leq m}\otimes \chi_N\otimes \chi_N&\rightarrow &V\otimes \chi_N\otimes \chi_N&\rightarrow& V/ V_{\leq m}
\otimes \chi_N\otimes \chi_N&\rightarrow& 0.
\end{matrix}
$$  Here 
\[\begin{aligned}
D_1=&\Delta_{V}\otimes \id- \id\otimes \Delta_{\chi_N},  \\
D_2=&\Delta_{V_{\leq m}}\otimes \id- \id\otimes \Delta_{\chi_N}\,\,\, {\rm  and }\\
D_3=&\Delta_{V/V_{\leq m}}\otimes \id- \id\otimes \Delta_{\chi_N}.
\end{aligned}
\]  Let $C$ be the cokernel of $D_1$. We have an exact sequence
\[0\to \ker D_1\to \ker D_2\to \ker D_3\to C\to \cdots.\]
By the induction hypothesis $ \ker D_1=\Delta_{V_{\leq m}}(V_{\leq m})$ and $\ker D_3=
\Delta_{V/V_{\leq m}}(V/V_{\leq m})$.  By the exact sequence (\ref{basexact})  we have the equality
$\ker D_2=\Delta_V(V).$
 \end{proof}

\subsection{The category of flat connections}
 For the rest of this paper we assume that  $F$ is a number field. By Borel's results (\cite{Bo1},\, \cite{Bo2})
it follows that for a positive integer $r,$ we have
$$H^p_M(\Spec F, \Q(r))=0\,\, \text{ for } \,\,p\neq 1.$$
By (\ref{cohomN})  we have  $H^p(N^\bullet)=0$ for $p<0$,  and $H^0(N^\bullet)=\Q.$  So the cdga $N$
is cohomologically connected. In this case the category ${\rm MT_{BK}}$ is equivalent to another category
which is the one of flat $N$-connections. 
   
   Let $A$ be an Adams graded commutative dga (cdga) and let $M$ be an Adams graded $\Q$-vector space.
An $A$-connection for $M$ is a map of graded vector spaces
\[\Gamma:\quad M\to M\otimes A^1\] 
of Adams degree 0. This name is taken from \cite{Lev}. This is called a twisting matrix in \cite{KM}.  $\Gamma$ is said to be flat if $d\Ga+\Ga^2=0.$ Here
$d\Ga$ is the map
\[M\overset{\Ga}\to M\otimes A^1\overset{\text{id}\otimes d}{\to} M\otimes A^2\]
and $\Ga^2$ is
\[M\overset{\Ga}\to M\otimes A^1\overset{\Ga\otimes \text{id}}{\to} M\otimes A^1\otimes A^1\overset{\text{id}\otimes m}\to M\otimes A^2\]
where $m$ is the multiplication of $A$.  We denote by $\text{Conn}_A^0$ the category of flat $A$-connections, and
let $\text{Conn}_A^{0f}$ be its full subcategory of  flat connections on finite dimensional Adams graded $\Q$-vector spaces.

 Now we assume that $A$ is cohomologically connected. We denote by $A\{1\}$ the 1-minimal model of $A$. 
For the defintion of 1-minimal model see  \cite{KM} Part IV Definition 2.5. We have a homomorpshism
$A\{1\} \to A$ of cdgas, and this induces an isomorphism on $H^i$ for $i\leq 1$ and an injection on $H^2$. 

Set $QA:=A\{1\}^1$ and let $\part: QA\to \wedge ^2QA$ denote the differential $d:\,A\{1\}^1\to \wedge^2A\{1\}^1
=A\{1\}^2$. Then $(QA,\part)$ is a co-Lie algebra over $\Q$. Since $A$ is Adams graded, $QA$ is also Adams graded.
Let $\text{co-rep}(QA)$ be the category of Adams graded co-modules over $QA$, and let 
$\text{co-rep}^f(QA)$ be the category of finite dimensional Adams graded co-modules over $QA$. 
We denote by $\chi_A$ the Adams graded Hopf algebra $H^0(B(A))$.  Let 
$\text{co-rep}^f(\chi_A)$ denote the category of finite dimensional Adams graded co-modules over 
$\chi_A$. By  \cite{KM} Part IV Theorem 1.2 and Theorem 2.10 we have  the equivalence of filtered neutral Tannakian categories
\[\text{co-rep}^f(\chi_A)\sim \text{co-rep}^f(QA)\sim \text{Conn}^{0f}_{A\{1\}}.\]
Consider our case of $A=N.$  By \cite{KM} Part IV Corollary 1.4  we have an isomorphism
\[
\begin{aligned}
&\text{Ext}^1_{{\rm MT_{BK}}}(\Q, \Q(r))\simeq H^1((N\{1\})_r)\\
\simeq &H^1((N)_r)=CH^r(\Spec F, 2r-1)\\
=&H^1_M(\Spec F, \Q(r))=K_{2r-1}(F)\otimes\Q.
\end{aligned}
\] So the second part of the condition (B) holds. As for the first part of (B),  suppose there is a
flat $N$-connection $M$ which is an extension of $\Q(0)$ by $\Q(n)$ for $n\leq 0$.  
If we look at the construction  of a 1-minimal model $N\{1\}$ of $N$ which is explained for example
in the proof of  \cite{KM}  Part IV Theorem 3.7,  we see that $(N\{1\}^1)_r=0$ for $r\leq 0$.
Noting that the Adams degree of $\Q(n)$ is equal to $-n$, we see that  the connection
\[\Gamma:\quad M\to M\otimes N\{1\}^1\] 
must be trivial since $\Ga$ is of Adams degree 0.

\section{Hodge realization of ${\rm MT_{BK}}$}
\label{Hodger}
We give a description of our Hodge realization of ${\rm MT_{BK}}$. For the details we refer the reader to
\cite{part II}.  We will see that the conditions (C) and (D) hold for our Hodge realization.
\subsection{The complex $AC^\bullet$}
\label{sectionsemi-alg}
We give a brief description of the complex $AC^\bullet$ which is crucial for the construction
of our Hodge realization. For the detail the reader is refered to \cite{part II}.

Let $(z_1,\cdots,z_n)$ be the inhomogenious coordinates of $P^n=(\P^1_\CC)^n$. 
We set
\begin{align}
\label{notation of divisors}
&\bold H^n=\bigcup_{\substack{1\leq i\leq n\\
\alpha=0, \infty}} H_{i,\alpha}, \quad
\bold D^n=\bigcup_{i=1}^n \{z_i=1\},  \quad \square^n=P^n-\bold D^n.
\end{align}
\index{$\bold H^n$}
\index{$\bold D^n$}
\index{$\square^n$}
Let $K$ be a semi-algebraic triangulation of $P^n$ such that each cubical
face and  the divisor
$\bold D^n$ are subcomplexes of $K$.  
We denote by 
$$C_\bullet(K;\QQ)\,\,\text{ resp. }C_\bullet(K,\bold D^n;\QQ)
(=C_\bullet(K;\QQ)/C_\bullet(K\cap \bold D^n;\QQ))$$ 
the chain complex resp. the relative chain 
complex of $K$. The differential is denoted by $\delta$.
An element of $C_p(K;\QQ)$ for $p\geq 0$ is written as $\sum a_\si \si$
where the sum is taken over $p$-simplexes of $K$. By doing so, it is agreed upon that an orientation
has been chosen for each $\si$.  By abuse of notation, an element of $C_\bullet(K,\bold D^n;\QQ)$ 
is often described similarly. 

\begin{definition}
 For an element $\gamma=\sum a_\si \sigma$ of
$C_{p}(K;\QQ)$, we define
the support\index{support} $|\gamma|$ of $\gamma$ as the subset of $|K|$
 given by
\begin{equation}
\label{support}
|\gamma|=\bigcup_{
\substack{\sigma\in K_p\\ a_\sigma\neq 0}} \sigma.
\end{equation}
\end{definition}

\begin{definition}
\label{def:semi-alg current}
Let $p \geq 0$ be an integer.
\begin{enumerate}

\item(Admissibility) A semi-algebraic subset $S$ of $P^n$
is 
said to be admissible\index{admissible} if for each cubical face $H$, 
the inequality 
$$
\dim(S\cap (H-\bold D^n))\leq \dim S -2\,\codim H
$$
holds.  Here note that $\dim(S\cap (H-\bold D^n))$ and $\dim S$ are the dimensions as semi-algebraic sets,
and $\codim H$ means the codimension of the subvariety $H$ of $P^n$.

\item Let $\gamma$ be an element of $C_p(K,\bold D^n; \QQ)$.
Then $\ga$ is 
said to be admissible if 
the support of a representative of $\ga$ in $C_p(K;\QQ)$ is admissible. This condition is independent of the choice
of a representative. 
\item We set 
$$
AC_{p}(K,\bold D^n;\QQ)
=\{\gamma\in C_{p}(K,\bold D^n;\QQ))
\mid 
\gamma \text{ and } \delta \gamma \text{ are admissible }\}
$$
\index{$AC_{p}(K,\bold D^n;\QQ^{q})$}
\end{enumerate}
\end{definition}
Two semi-algebraic triangulations $K$ and $K'$ have a common subdivision. Thus we can 
take the inductive limit of the complexes $C_\bullet$ and $AC_\bullet$  over subdivisions.

\begin{definition}

\label{def:definition of AC with inductive limit}
We set
\begin{align*}
C_\bullet(P^n,\bold D^n;\QQ) =
\underset{\underset{K}\longrightarrow}{\lim } \ 
C_\bullet(K,\bold D^n;\QQ), \quad
AC_\bullet(P^n,\bold D^n;\QQ) =
\underset{\underset{K}\longrightarrow}{\lim } \ 
AC_\bullet(K,\bold D^n;\QQ).
\end{align*}
Here the limit is taken on the directed set of  triangulations.

\end{definition}
\index{$C_\bullet(P^n,\bold D;\QQ^\bullet)$}
\index{$AC_\bullet(P^n,\bold D;\QQ^\bullet)$}


\noindent A proof of the following Proposition is given in \cite{part II}.

\begin{proposition}[Proposition 2.8 \cite{part II}] 
\label{prop: moving lemma}
The inclusion of complexes
\begin{equation}
\label{moving quasi-iso}
\iota:\,\,AC_\bullet(P^n,\bold D; \QQ) 
\to C_\bullet(P^n,\bold D; \QQ)
\end{equation}
is a quasi-isomorphism.
\end{proposition}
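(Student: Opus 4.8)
The plan is to prove that $\iota$ is a quasi-isomorphism by a moving lemma for admissible semi-algebraic chains, modelled on Bloch's moving lemma for higher Chow groups. Passing to the coordinates $u_i=1/(1-z_i)$ identifies $\square^n=P^n-\bold D^n$ with $\mathbb{A}^n$, carries the cubical faces $H_{i,0},H_{i,\infty}$ to the coordinate subspaces $\{u_i=1\},\{u_i=0\}$ of $\mathbb{A}^n$, and carries $\bold D^n$ to $\bigcup_i\{u_i=\infty\}$. Let $G$ be the connected group of coordinatewise affine transformations $(u_i)\mapsto(\lambda_iu_i+a_i)$ with $\lambda_i\in\mathbb{G}_m$, $a_i\in\mathbb{G}_a$. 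It acts on $P^n$, each of its elements fixes the point at infinity in every $\P^1$-factor and hence preserves each component $\{u_i=\infty\}$ of $\bold D^n$, and it contains the translation subgroup $\mathbb{G}_a^n$, which acts transitively on $\mathbb{A}^n$. By the semi-algebraic form of Kleiman's transversality theorem (which here is a fibred dimension count: the incidence set $\{(g,x):g|c|\ni gx\in H-\bold D^n\}$ fibres over $|c|\cap\mathbb{A}^n$ with fibres of real dimension $\le 4n-2\codim H$, hence has the expected dimension, and this is inherited by a generic $g$), for any semi-algebraic chain $c$ in $P^n$, any cubical face $H$, and $g$ in a dense open subset of $G$ one has $\dim\bigl(g|c|\cap(H-\bold D^n)\bigr)\le\dim|c|-2\,\codim H$, a negative value meaning the intersection is empty; one uses here that $H-\bold D^n$ lies in $\mathbb{A}^n$, has real dimension $2n-2\codim H$, and that $G$ moves $|c|\cap\mathbb{A}^n$ transitively inside $\mathbb{A}^n$. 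As there are finitely many cubical faces, a dense open set of $g$ makes $gc$ admissible; and if $\delta c$ is supported on $\bold D^n$ then so is $\delta(gc)=g(\delta c)$, whence $gc\in AC_\bullet(P^n,\bold D;\QQ)$.

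For surjectivity of $\iota_*$ on homology, let $\gamma$ be a relative cycle. Choose $g\in G$ generic as above, so $g\gamma\in AC_\bullet(P^n,\bold D;\QQ)$ is a relative cycle. Since $G$ is connected, choose a semi-algebraic path $t\mapsto g_t$, $t\in[0,1]$, with $g_0=\id$ and $g_1=g$, and let $P(\gamma)$ be the prism chain, the pushforward of $\Delta^1\times\gamma$ under $(t,x)\mapsto g_t(x)$; then $\delta P(\gamma)=g\gamma-\gamma-P(\delta\gamma)$, and since every $g_t$ preserves $\bold D^n$ and $\delta\gamma$ is supported on $\bold D^n$, the term $P(\delta\gamma)$ is supported on $\bold D^n$, so $\delta P(\gamma)=g\gamma-\gamma$ in $C_\bullet(P^n,\bold D;\QQ)$. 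Hence $\iota_*[g\gamma]=[g\gamma]=[\gamma]$; no admissibility of $P(\gamma)$ is needed for this direction.

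For injectivity, let $\gamma\in AC_\bullet(P^n,\bold D;\QQ)$ be a relative cycle with $\iota(\gamma)=\delta\beta$ in $C_\bullet(P^n,\bold D;\QQ)$ for some relative chain $\beta$, so $\delta\beta\equiv\gamma$ modulo chains on $\bold D^n$. Choose $g\in G$ generic enough that $g\gamma$, $g|\beta|$ and $g(\delta\beta)$ are all admissible; then $g\beta\in AC_\bullet(P^n,\bold D;\QQ)$ (its boundary is supported on $g|\gamma|\cup\bold D^n$) and $\delta(g\beta)=g(\delta\beta)\equiv g\gamma$ modulo $\bold D^n$. Choose the path $t\mapsto g_t$ from $\id$ to $g$ generic; I claim the prism $P(\gamma)$ is then admissible. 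Writing $L$ for a cubical face of codimension $c$ (a coordinate subspace of $\mathbb{A}^n$), the set $|P(\gamma)|\cap L$ is the image of $\{(t,x)\in[0,1]\times|\gamma|:g_t(x)\in L\}$, which fibres over the arc $\{g_t\}$ with fibre $g_t^{-1}(L)\cap|\gamma|$ over $g_t$. By Kleiman this fibre has dimension $\le\dim\gamma-2c$ for $g_t$ outside a proper closed ``bad'' subset $B\subset G$; crucially $\id\notin B$ because $\gamma$ is admissible, and $g_1\notin B$ because $g$ is generic, so a generic arc either avoids $B$ (when $B$ has codimension $\ge2$) or meets it in finitely many generic points, over each of which the fibre has dimension $\le\dim\gamma-2c+1$. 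In all cases $\dim(|P(\gamma)|\cap L)\le(\dim\gamma+1)-2c=\dim P(\gamma)-2c$. Together with admissibility of $g\gamma$, of $\gamma$, and of $P(\delta\gamma)$ (supported on $\bold D^n$), this gives $P(\gamma)\in AC_\bullet(P^n,\bold D;\QQ)$ with $\delta P(\gamma)\equiv g\gamma-\gamma$ modulo $\bold D^n$. Therefore $\delta\bigl(g\beta-P(\gamma)\bigr)=\gamma$ in $C_\bullet(P^n,\bold D;\QQ)$ with $g\beta-P(\gamma)\in AC_\bullet(P^n,\bold D;\QQ)$, so $[\gamma]=0$ in the homology of $AC_\bullet(P^n,\bold D;\QQ)$.

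I expect the main work to lie in the two transversality estimates, namely that a generic $g\in G$ puts a chain in admissible position and that a generic arc in $G$ keeps the prism admissible: both amount to the semi-algebraic incidence-variety bookkeeping sketched above, carried out uniformly over all cubical faces and over the parameter spaces $G$ and $[0,1]$, and one must also invoke the standard facts that semi-algebraic triangulations compatible with the cubical faces and with the maps $g$ exist and that one may pass to the inductive limit over them. As a consistency check one notes that $H_\bullet(P^n,\bold D^n;\QQ)$ is isomorphic to the Borel--Moore homology of $\mathbb{A}^n$, hence is $\QQ$ in degree $2n$ and $0$ otherwise, and that the fundamental chain of a semi-algebraic triangulation of $P^n$ is already admissible (the defining inequality holds with equality), so the real content of the proposition is the vanishing of $H_\bullet(AC_\bullet(P^n,\bold D;\QQ))$ below degree $2n$, which the moving lemma supplies.
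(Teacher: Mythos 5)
The paper does not prove this proposition; it cites Proposition 2.8 of \cite{part II}. Your argument is the natural moving lemma for admissible semi-algebraic chains and is in the same spirit as the cited proof: move by a generic element of a group $G$ acting transitively on $\square^n\cong\mathbb{A}^n$ and preserving $\bold D^n$, invoke a semi-algebraic Kleiman transversality, and use a prism homotopy. The coordinate change, the choice of $G$, and the separate treatment of surjectivity (where only the moved chain need be admissible) and injectivity (where the prism must also be admissible) are all correct.

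The point that needs a sharper statement is the generic-arc estimate. Rather than the dichotomy ``either $B$ has codimension $\ge 2$ or the arc meets it in finitely many generic points,'' the clean input is the semi-algebraic dimension inequality for the projection $\pi:I\to G$ of the incidence set $I=\{(g,x)\in G\times|\gamma|:gx\in L-\bold D^n\}$: since $\dim I\le\dim G+(\dim|\gamma|-2\codim L)$, one has, for every $j\ge 0$, $\dim\{g:\dim\pi^{-1}(g)\ge\dim|\gamma|-2\codim L+j\}\le\dim G-j$. This stratified bound is what guarantees that a generic semi-algebraic path from $\id$ to $g$ (both of which lie off the $j\ge1$ strata by admissibility of $\gamma$ and genericity of $g$) avoids the $j\ge 2$ strata and meets the $j=1$ stratum only in finitely many points, yielding $\dim(|P(\gamma)|\cap(L-\bold D^n))\le\dim P(\gamma)-2\codim L$. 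You should also record that one needs $\delta P(\gamma)$ admissible for $P(\gamma)\in AC_\bullet$, which holds because $\delta P(\gamma)=g\gamma-\gamma-P(\delta\gamma)$ with the last term supported on $\bold D^n$, as you observe. With those clarifications, and with the routine passage to common refinements of semi-algebraic triangulations compatible with $g$ and the arc, your outline is a correct proof.
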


For a codimension one face $H_{i,\al}$ we can define the intersection map
$$
\part_{i,\al}:
AC_\bullet(P^n,\bold D^n;\QQ)\to
AC_{\bullet-2}(H_{i,\al},\bold D^n\cap H_{i,\al};\QQ)
$$
which commutes with the differential $\delta$.  We call $\part_{i,\al}$ a {\it face map.} The construction of face maps
is given in \cite{part II} Section 3. Face maps commute with each other,
and we can define the {\it cubical differential}

\begin{equation}
\label{total face map definition} 
\part =\sum_{i=1}^n (-1)^{i-1}
(\part_{i,0}-\part_{i,\infty}):\,\,
AC_\bullet(P^n,\bold D^n;\QQ)\to
AC_{\bullet-2}(P^{n-1},\bold D^{n-1};\QQ).
\end{equation}

\subsection{Cauchy-Stokes formula}

Let $K$ be a  triangulation of $P^n$. 
We define the rational differential form $\omega_n$ on $P^n$ by
$$\omega_n =\frac{1}{(2\pi i)^n}\frac{dz_1}{z_1}\wedge
\cdots \we\frac{dz_n}{z_n}.
$$ 
\index{$\omega_n$}
By applying  the Main Theorem 4.4 of \cite{part I} to the case where $A=\si$, $m=\dim \si=n$ and $\omega=\omega_n$
we have the  following theorem.
 \begin{theorem}[Theorem 4.1 \cite{part II}] 
\label{convadmiss}
Let $\si$ be an admissible $n$-simplex of $K$.  
The integral 
\begin{equation}
\label{convergenct integral1}
\int_\sigma \omega_n
\end{equation}
converges absolutely.

\end{theorem}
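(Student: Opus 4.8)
The plan is to reduce the statement to the convergence result of \cite{part I} cited as the "Main Theorem 4.4", applied in the special case indicated. Concretely, one should verify that the hypotheses of that theorem are met when $A=\sigma$ is an admissible $n$-simplex, $m=\dim\sigma=n$, and $\omega=\omega_n$. The form $\omega_n=\frac{1}{(2\pi i)^n}\frac{dz_1}{z_1}\wedge\cdots\wedge\frac{dz_n}{z_n}$ is a meromorphic $n$-form on $P^n=(\PP^1_\CC)^n$ whose polar locus is contained in $\bold H^n=\bigcup_{i,\alpha}H_{i,\alpha}$, the union of the codimension-one cubical faces; its poles are simple along each $H_{i,\alpha}$. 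So the first thing to check is that $\omega_n$ has the type of logarithmic/meromorphic singularity along a normal-crossing divisor that the Main Theorem of \cite{part I} admits as input.

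Next, the key point is the admissibility of $\sigma$. By Definition~\ref{def:semi-alg current}(1), admissibility of the underlying semi-algebraic set $|\sigma|$ means precisely that for every cubical face $H$ one has $\dim(|\sigma|\cap(H-\bold D^n))\le \dim|\sigma|-2\,\codim H = n-2\,\codim H$. This is exactly the dimension bound that controls how deeply $\sigma$ meets the polar locus of $\omega_n$: along the simple poles $H_{i,\alpha}$ (codimension one), $\sigma$ meets $H_{i,\alpha}-\bold D^n$ in dimension $\le n-2$, i.e. in real codimension $\ge 2$ inside $\sigma$; and along higher-codimension intersections of the $H_{i,\alpha}$ the excess of the singularity is matched by a correspondingly larger drop in dimension. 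I would therefore rewrite the local integrability criterion of \cite{part I} in these terms and observe that the admissibility inequality is the precise numerical hypothesis needed: near a point of $|\sigma|\cap H_{i,\alpha}$ the integrand blows up like $|z_i|^{-1}$ while the locus where this happens sits in real codimension $\ge 2$, so in suitable local (semi-algebraic) coordinates on $\sigma$ the integrand is dominated by $r^{-1}$ on a set of codimension $2$, hence by something like $\int r^{-1}\,r\,dr\,d\theta$, which converges. (One also uses that $\bold D^n$, where $\omega_n$ is not defined in our chart but where there is no singularity of the relevant kind, plays no role because $\sigma$ is a chain relative to $\bold D^n$; this is why the intersection is taken with $H-\bold D^n$.)

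The remaining steps are bookkeeping: choose a semi-algebraic triangulation $K$ as in the hypotheses so that each cubical face and $\bold D^n$ are subcomplexes, so that $|\sigma|$ is a semi-algebraic set of the kind handled in \cite{part I}; note that absolute convergence is a local statement on the compact set $|\sigma|$, so it suffices to check it in a neighborhood of each point of $|\sigma|\cap\bold H^n$, the complement being a region where $\omega_n$ is smooth and $|\sigma|$ compact; and finally invoke \cite{part I} Main Theorem 4.4 with the data $(A,m,\omega)=(\sigma,n,\omega_n)$. The main obstacle is making sure the admissibility inequality translates correctly into the convergence hypothesis of \cite{part I} at the deeper strata (intersections of two or more $H_{i,\alpha}$), where the order of the pole of $\omega_n$ grows but the dimension of $|\sigma|\cap(H-\bold D^n)$ drops by $2\,\codim H$; checking that these two effects balance in the integrability estimate is the heart of the matter, and it is exactly what the Main Theorem of \cite{part I} is designed to handle, so the real work is citing it with the hypotheses verified rather than reproving the estimate.
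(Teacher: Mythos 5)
Your proposal is essentially identical to the paper's proof: the paper simply states ``By applying the Main Theorem 4.4 of \cite{part I} to the case where $A=\si$, $m=\dim \si=n$ and $\omega=\omega_n$ we have the following theorem,'' which is exactly the reduction you propose. Your additional discussion of how the admissibility inequality $\dim(|\sigma|\cap(H-\bold D^n))\le n-2\,\codim H$ matches the pole order along the strata of $\bold H^n$ is correct motivation, but the paper leaves that entirely to the cited Main Theorem and does not reproduce the estimate.
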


\begin{definition}
Let $\gamma$  
be an element in $AC_n(K,\bold D^n;\QQ)$, and let $\sum_{\sigma}a_\si \sigma$
be a representative of $\gamma$ in $C_n(K;\QQ)$.
We define $I_n(\gamma)\in \CC$
by\index{$I_n$}
\begin{equation}
\label{def: def of I}
I_n(\gamma)=(-1)^{\frac{n(n-1)}2}\sum_{\sigma}\int_{\sigma}a_\sigma\omega_n
\end{equation}
where the sum is taken over the $n$-simplexes $\si$ not contained in $\bold D^n$.
The right hand side of {\rm (\ref{def: def of I})} does not depend on the choice of a representative of $\gamma$.
The map $I_n$ is compatible with subdivisions of triangulations, and we obtain a map
$$
I_n:AC_n(P^n,\bold D^n;\QQ)\to \CC.
$$
\end{definition}
In \cite{part II} Section 4  the following result is proved. (Theorem 4.3 \cite{part II}) 
\begin{theorem}[Cauchy-Stokes formula]
\label{th:generalized Cauchy formula}
\index{generalized Cauchy formula}
Let 
$\ga$ be an element in
 $AC_{n+1}(P^n,\bold D^n;\QQ)$. 
Then we have the equality
\begin{equation}
 \label{eq:cauchy formula} 
I_{n-1}(\part \gamma)+(-1)^nI_n(\delta \gamma)=0.
\end{equation}
\end{theorem}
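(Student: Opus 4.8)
The plan is to run the classical Stokes-with-residues argument. For each $(n+1)$-simplex $\sigma$ occurring in a representative of $\gamma$ I would integrate the closed form $d\omega_n$ over $\sigma$ with a small neighbourhood of the polar divisor $\bold H^n$ removed, apply the ordinary Stokes theorem on the resulting manifold with corners, and then let the removed neighbourhood shrink to zero. In the limit the portion of the boundary lying along $\delta\gamma$ yields the integral defining $I_n(\delta\gamma)$, while the thin tubes around the components $H_{i,\alpha}$ of $\bold H^n$ yield, via the Poincar\'e residues of $\omega_n$ along these divisors, the integral defining $I_{n-1}(\partial\gamma)$.

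First I would fix a good representative. Writing $\gamma=\sum_\sigma a_\sigma\sigma$ with $\sigma$ running over the $(n+1)$-simplices of $K$, after refining $K$ we may assume that the supports $|\gamma|$, $|\delta\gamma|$ and $|\partial_{i,\alpha}\gamma|$ are triangulated by \emph{admissible} simplices: a full-dimensional semialgebraic subset of an admissible set is again admissible, since the inequality $\dim(S\cap(H-\bold D^n))\le\dim S-2\,\codim H$ is inherited by full-dimensional subsets, and only full-dimensional simplices carry the integrals that follow. Simplices contained in $\bold D^n$ may be discarded, because $\omega_n$ and $\omega_{n-1}$ restrict to $0$ there. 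By Theorem \ref{convadmiss}, applied on $P^n$ and on each $H_{i,\alpha}\cong P^{n-1}$, all the integrals $\int_{\delta\sigma}\omega_n$ and $\int_{\partial_{i,\alpha}\sigma}\omega_{n-1}$ then converge absolutely.

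Next, for small $\epsilon>0$ let $T_\epsilon^{i,\alpha}$ be the $\epsilon$-tube around $H_{i,\alpha}$ and $T_\epsilon=\bigcup_{i,\alpha}T_\epsilon^{i,\alpha}$. On a neighbourhood of $\sigma\setminus T_\epsilon$ the form $\omega_n$ is smooth and closed, being a holomorphic $n$-form, so applying Stokes' theorem to each $\sigma$, multiplying by $a_\sigma$ and summing (the interior faces cancel) gives
\[
0=\int_{\delta\gamma\setminus T_\epsilon}\omega_n+\int_{\partial T_\epsilon\cap\gamma}\omega_n ,
\]
where $\partial T_\epsilon$ carries the orientation it inherits from the boundary of $\gamma\setminus T_\epsilon$. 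Now I would let $\epsilon\to0$. For the first term, absolute convergence of $\int_{\delta\gamma}\omega_n$ together with admissibility of $\delta\gamma$ (which gives $\dim(\delta\gamma\cap H_{i,\alpha})\le n-2$) forces $\int_{\delta\gamma\cap T_\epsilon}\omega_n\to0$, so it tends to $\int_{\delta\gamma}\omega_n$. For the tube term, moving $\tfrac{dz_i}{z_i}$ to the front writes $\omega_n=(-1)^{i-1}\tfrac1{2\pi i}\tfrac{dz_i}{z_i}\wedge\omega_{n-1}$ near $H_{i,\alpha}$, with $\omega_{n-1}$ the analogous form on $H_{i,\alpha}\cong P^{n-1}$ and an extra sign $-1$ when $\alpha=\infty$ coming from $z_i\mapsto 1/z_i$; admissibility of $\gamma$ gives $\dim(\sigma\cap H_{i,\alpha})\le n-1$, so Fubini over the normal circle, $\tfrac1{2\pi i}\oint\tfrac{dz_i}{z_i}=1$, and convergence of $\int_{\partial_{i,\alpha}\gamma}\omega_{n-1}$ show that the contribution of $T_\epsilon^{i,\alpha}$ to $\int_{\partial T_\epsilon\cap\gamma}\omega_n$ tends to $\kappa\,(-1)^{i-1}\int_{\partial_{i,0}\gamma}\omega_{n-1}$ for $\alpha=0$ and to $-\kappa\,(-1)^{i-1}\int_{\partial_{i,\infty}\gamma}\omega_{n-1}$ for $\alpha=\infty$, with a fixed orientation sign $\kappa$. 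Summed over $i$ and $\alpha$ this is exactly $\kappa$ times the cubical differential $\partial=\sum_i(-1)^{i-1}(\partial_{i,0}-\partial_{i,\infty})$ applied to $\gamma$, so in the limit $\int_{\delta\gamma}\omega_n=-\kappa\int_{\partial\gamma}\omega_{n-1}$. Finally, inserting $I_n(\delta\gamma)=(-1)^{n(n-1)/2}\int_{\delta\gamma}\omega_n$ and $I_{n-1}(\partial\gamma)=(-1)^{(n-1)(n-2)/2}\int_{\partial\gamma}\omega_{n-1}$ and using $\tfrac{n(n-1)}2-\tfrac{(n-1)(n-2)}2=n-1$, the two normalisation signs and $\kappa$ collapse to the factor $(-1)^n$, which is the claimed identity $I_{n-1}(\partial\gamma)+(-1)^nI_n(\delta\gamma)=0$.

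The main obstacle will be making the limit $\epsilon\to0$ rigorous: one has to show that the part of $\int_{\delta\gamma}\omega_n$ carried by $T_\epsilon$ is negligible and that the tube integrals genuinely converge to the residue integrals rather than to something spurious. Both reduce to quantitative bounds, in terms of the distance to the strata of $\bold H^n$, on the volume of $|\gamma|$ and $|\delta\gamma|$ near those strata — which is precisely what the admissibility inequalities deliver, and is the same analysis that underlies Theorem \ref{convadmiss} (equivalently, the Main Theorem 4.4 of \cite{part I}); in practice I would quote that circle of estimates rather than redo it. The rest is routine bookkeeping: matching orientations of the normal circles and checking that the residue signs agree with $(-1)^{i-1}(\partial_{i,0}-\partial_{i,\infty})$ and with the sign conventions for the face maps in \cite{part II}.
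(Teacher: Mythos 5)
The paper does not prove this theorem: it is stated verbatim as Theorem 4.3 of \cite{part II}, with the sentence ``In \cite{part II} Section 4 the following result is proved'' immediately before it, and no argument is reproduced here. So there is no in-paper proof to compare against.

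Your Stokes-with-residues sketch is the expected route and is consistent with what the cited references are set up to deliver: \cite{part I} is devoted precisely to the convergence estimates for integrals of logarithmic forms on semi-algebraic sets near the polar strata (the paper invokes its Main Theorem 4.4 for Theorem \ref{convadmiss}), and \cite{part II} packages this into the residue/face-map formalism and the generalized Cauchy formula. Your reduction of the $\epsilon\to0$ limit to those estimates, and your residue computation $\omega_n=(-1)^{i-1}\tfrac{1}{2\pi i}\tfrac{dz_i}{z_i}\wedge\omega_{n-1}$ with the sign flip at $z_i=\infty$ producing $\partial_{i,0}-\partial_{i,\infty}$, is the right bookkeeping; the normalization $(-1)^{n(n-1)/2}-(-1)^{(n-1)(n-2)/2}$ shift does produce the factor $(-1)^{n-1}$ you need, so the sign check goes through. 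One caveat worth stressing more than you do: the step ``tube contribution $\to$ residue integral'' is not a soft Fubini argument for a general semi-algebraic chain, because $\gamma$ need not fibre over $\gamma\cap H_{i,\alpha}$ by circles near the pole; admissibility only bounds dimensions. Making that limit honest is where essentially all the analytic work of \cite{part I}/\cite{part II} lives, and one also has to check that the face maps $\partial_{i,\alpha}$ of \cite{part II} Section 3 agree with the geometric intersection appearing in the residue. Since the paper defers to those references, delegating the same steps in your proof is appropriate, but a self-contained proof would need to carry out that analysis.
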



\begin{definition}[The complex $AC^\bullet$] 
\label{def:admissible triple complex}
For an integer $k$, let 
$$
 AC^k=\underset{n-i=k}\bigoplus
AC_i(P^n, \bold D^n; \QQ)^{\rm alt}
$$
Here we set 
$$
AC_i(P^0, \bold D^0; \QQ)^{\rm alt}=
\left\{
\begin{array}{cc}
\QQ & i=0\\
0& i\neq 0
\end{array}
\right.
$$
We define the  differential $d$ of $AC^\bullet$ 
by the equality
\begin{equation}
\label{eq:definition of differential D}
d(\alpha)=\partial \alpha+(-1)^n\delta\alpha \text{ for } \al\in AC_i(P^n,\bold D^n;\QQ)^{\rm alt}.
\end{equation}
Here we use the fact that the maps $\part$ and $\delta$ commute with the projector $\rm Alt$. 
cf. Lemma 4.3 \cite{BK}.

\end{definition}
We have a natural inclusion $Z^r(\Spec {F}, 2r-i)\to AC_{2r-2i}(P^{2r-i},\bold D^{2r-i}; \QQ)$
which induces an inclusion $\iota:\, N\to AC^\bullet$.  The following result is proved in
\cite{part II} Section 5. 
\begin{proposition}[Proposition 5.2 \cite{part II}]
\label{properties of tc}
\begin{enumerate}
\item Let $u:\,\QQ\to AC^\bullet$ be the inclusion defined by identifying
$\QQ$ with $AC_0(P^0,\bold D^0;\QQ)^{\rm alt}$. Then the map $u$ is a quasi-isomorphism.

\item The inclusion $\iota$ and the product defined by
$$
\begin{array}{c}
N_r^i\otimes AC_j(P^n,\bold D^n;\QQ)^{\rm alt}\to AC_{2r-2i+j}(P^{n+2r-i},\bold D^{n+2r-i};\QQ)^{\rm alt}\\
z\times \ga\mapsto z\cdot \ga:={\rm Alt}(\iota(z)\times \ga)
\end{array}
$$
makes $AC^\bullet$ a differential graded $N$-module i.e. the product
sends $N^i_r\otimes AC^j$ to $AC^{i+j}$, and one has the derivation formula; for $z\in N_r^i$ and $\ga\in AC^\bullet$,
one has 
$$ d(z\cdot \ga)=\part z\cdot \ga+(-1)^iz\cdot d\ga$$

\item The map $I$ defined by 
$I=\sum_{n\geq 0}I_n:\,{AC}^\bullet\to \CC$ is a map of complexes. Here the map $I$
on $AC_0(P^0, D^0; \QQ)^{\rm alt}=\QQ$ is defined to be the natural inclusion of $\QQ$ to $\CC$. The map
$I$ is a homomorphism of $N$-modules; for $z\in N$ and $\ga\in AC^\bullet$,
we have an equality
$$I(z\cdot \ga)=\e(z)I(\ga)$$

\item Let $I_\CC: AC^\bullet \otimes \CC \to \CC$ be the map defined by the composition
\newline $\,AC^\bullet\ot \CC \overset {I\ot {\rm id}}\to \CC\ot \CC\overset{m}\to \CC $. Here the map $m$ is the multiplication.
Then  this map $I_\CC$ is a quasi-isomorphism.
\end{enumerate}

\end{proposition}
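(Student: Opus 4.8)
The plan is to establish the four assertions in order, with (4) following formally from (1) and (3). For (1), I would regard $AC^\bullet$ as the total complex of the double complex $\bigl(AC_i(P^n,\bold D^n;\QQ)^{\rm alt}\bigr)_{n\ge 0,\,i\ge 0}$ with differentials $\partial$ (which lowers $n$) and $(-1)^n\delta$ (which preserves $n$), and run the spectral sequence of the filtration by $n$; this filtration is exhaustive and bounded below in each total degree, so the sequence converges, and equivalently it suffices to note that the rows with $n\ge 1$ are acyclic. By Proposition \ref{prop: moving lemma} the inclusion $AC_\bullet(P^n,\bold D^n;\QQ)\hookrightarrow C_\bullet(P^n,\bold D^n;\QQ)$ is a quasi-isomorphism, and it is $G_n$-equivariant because admissibility is a $G_n$-invariant condition; since $\QQ[G_n]$ is semisimple it remains a quasi-isomorphism after applying the idempotent $\Alt_n$. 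Hence the $n$-th row of $E_1$ is $H_\bullet(P^n,\bold D^n;\QQ)^{\rm alt}$. Now $(P^n,\bold D^n)$ is the $n$-fold product of the pair $(\P^1_\CC,\{1\})$ in the sense of the relative K\"unneth theorem, so $H_\bullet(P^n,\bold D^n;\QQ)\cong H_\bullet(\P^1,\{1\};\QQ)^{\otimes n}$, which is $\QQ$ concentrated in degree $2n$; and $G_n$ acts on this line \emph{trivially}, since the coordinate inversions $z_i\mapsto z_i^{-1}$ are holomorphic and a transposition of two factors permutes two $2$-real-dimensional blocks of coordinates, so every $g\in G_n$ acts by an orientation-preserving self-homeomorphism of the pair. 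Since the character $\sign:G_n\to\{\pm1\}$ is nontrivial for every $n\ge1$, the alternating part of this trivial module vanishes; thus $E_1$ is $\QQ$ in total degree $0$ (the $n=0$ entry) and $0$ elsewhere, so $H^\bullet(AC^\bullet)=\QQ$ concentrated in degree $0$. As $u$ sends $1$ to a cocycle generating this $H^0$, $u$ is a quasi-isomorphism.

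For (2), the point is that the external product of $\iota(z)$ — represented by the projective (hence compact) variety $\overline z\subset P^{2r-i}$, the closure of a cycle $z\in Z^r(\Spec F,2r-i)$ — with an admissible chain $\gamma$ is again admissible with admissible boundary; once this is known the $N$-module axioms and the sign $(-1)^i$ in the derivation formula are a matter of bookkeeping. The admissibility inequality of Definition \ref{def:semi-alg current} for $\overline z\times\gamma$ follows from the fact that $z$ meets every cubical face properly together with the admissibility of $\gamma$, since the cubical faces of $P^{n+2r-i}$ are products of faces in the two blocks of coordinates; and since $\overline z$ is compact one has $\delta\,\iota(z)=0$, so $\delta(\iota(z)\times\gamma)=\pm\,\iota(z)\times\delta\gamma$, which is admissible because $\delta\gamma$ is. The derivation formula $d(z\cdot\gamma)=\partial z\cdot\gamma+(-1)^i z\cdot d\gamma$ then comes out of expanding $d=\partial+(-1)^n\delta$, separating the ``$z$-faces'' from the ``$\gamma$-faces'' in the cubical differential, again using $\delta\,\iota(z)=0$, and using that $\partial$, $\delta$ and $\Alt$ are mutually compatible (Lemma 4.3 of \cite{BK}).

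For (3), the assertion that $I$ is a map of complexes is — once one observes that $I$ vanishes outside $AC^0$ for degree reasons — exactly the Cauchy--Stokes identity $I_{n-1}(\partial\gamma)+(-1)^nI_n(\delta\gamma)=0$ of Theorem \ref{th:generalized Cauchy formula}. For the identity $I(z\cdot\gamma)=\epsilon(z)I(\gamma)$: if $z\in N_0=\QQ$ it is immediate, since then $z\cdot\gamma$ is scalar multiplication and $\epsilon$ is the identity on $N_0$. If $z\in N_r$ with $r>0$ then $\epsilon(z)=0$; using $g^*\omega_N=\sign(g)\,\omega_N$ for $g\in G_N$ one finds $I\circ\Alt=I$, so $I(z\cdot\gamma)=I(\iota(z)\times\gamma)$, and writing $\omega_{n+2r-i}$ on $P^{2r-i}\times P^n$ as the exterior product of $\omega_{2r-i}$ and $\omega_n$, its restriction to $\overline z\times\gamma$ already vanishes, because the restriction of the holomorphic $(2r-i)$-form $\omega_{2r-i}$ to $\overline z$ (which has complex dimension $r-i<2r-i$) is zero; hence $I(z\cdot\gamma)=0=\epsilon(z)I(\gamma)$. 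Finally (4) is formal: since $\CC$ is flat over $\QQ$, part (1) gives that $u\otimes\id:\CC\to AC^\bullet\otimes\CC$ is a quasi-isomorphism, while $I_\CC\circ(u\otimes\id)=\id_\CC$ because $I(u(1))=1$; so $I_\CC$ is a quasi-isomorphism.

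The step I expect to be the main obstacle is (1): besides the care needed to have the moving lemma available $G_n$-equivariantly and to justify the convergence of the spectral sequence, the crucial — and a priori surprising — input is that $G_n$ acts \emph{trivially} on the one-dimensional top homology of $(P^n,\bold D^n)$, which is exactly what forces its alternating part, and hence all of $AC^\bullet$ away from $n=0$, to vanish. The remaining assertions are then either a sign bookkeeping (as in (2) and the first half of (3)) or a direct consequence of the already available Cauchy--Stokes formula and part (1).
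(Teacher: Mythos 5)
The paper does not prove this proposition; it simply cites Proposition~5.2 of \cite{part II}, so there is no internal proof to compare against. Judged on its own, your argument is sound and is the natural one. Each of the four parts checks out: in (1) the filtration by $n$ is increasing, exhaustive and bounded below, and the key observation that $G_n$ acts \emph{trivially} on the line $H_{2n}(P^n,\bold D^n;\QQ)\cong H_2(\P^1,\{1\};\QQ)^{\otimes n}$ — because coordinate inversions are holomorphic and transpositions swap $2$-real-dimensional blocks — combined with the nontriviality of $\sign$ for $n\ge 1$ is exactly what kills all rows with $n\ge 1$ after alternation, and the moving lemma transports this from $C_\bullet$ to $AC_\bullet$ since the inclusion is visibly $G_n$-equivariant and $\QQ[G_n]$ is semisimple. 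In (2) the two facts you use — the product of a properly intersecting cycle closure with an admissible chain is admissible, and $\delta\iota(z)=0$ since a compact irreducible variety has a fundamental cycle — reduce the derivation formula to sign bookkeeping, and the sign works out since $(-1)^{2r-i}=(-1)^i$. In (3) the chain-map claim is precisely the Cauchy–Stokes identity once one notes $I$ lives only on $AC^0$; for $r>0$ the module identity $I(z\cdot\gamma)=0$ follows either for degree reasons (when $i\ne 0$) or, when $i=0$, from $I\circ\Alt=I$ together with the vanishing of the holomorphic $2r$-form $\omega_{2r}$ on the $r$-dimensional $\overline z$. Part (4) is formal from (1), (3), and $I(u(1))=1$. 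The only place I would add a word of caution is the convergence assertion in (1): rather than invoking a general convergence theorem, it is cleanest to observe that $F_0AC^\bullet=\QQ$ and that $AC^\bullet/F_0$ is a filtered colimit of complexes that are acyclic by induction on $p$ (using the acyclic rows), since homology commutes with filtered colimits; but your statement is correct and this is only a matter of presentation.
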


\subsection{A Hodge realization of ${\rm MT_{BK}}$.}
\label{Hhodge}

We recall the definition of the Tate Hodge structure.  For an integer $r$, let
$\Q(r)=\Q(2\pi i)^r$\index{$\Q(r)$} with the weight filtration $W$ defined by
$\Q(r)=W_{-2r}\supset W_{-2r-1}=0$, and let $\C(r)=\C$ with
the Hodge filtration $F$ defined by 
$\C(r)=F^{-r}\supset F^{-r+1}=0$.\index{$\C(r)$}
We define the mixed Tate Hodge structure $\Q_{Hg}(r)$\index{$\Q_{Hg}(r)$} 
of weight $-2r$ by
the $\QQ$-mixed Hodge structure $(\QQ(r),\CC(r),F,W)$ where the 
comparison map $c:\,\Q(r)\to \C(r)$ is defined by the inclusion map.
This is a Hodge structure of type $(-r,-r)$.
For  a mixed Hodge structure $H_{Hg}$,
$H_{Hg}\otimes \QQ_{Hg}(r)$ is denoted by
 $H_{Hg}(r)$.
\index{$H(r)$, $H_{Hg}(r)$}  A (finite dimensional) mixed Hodge structure is called a mixed Tate
Hodge structure if the weight graded quotients
are isomorphic to direct sums of Tate Hodge structures.

In this section, we define a possibly infinite dimensional mixed Hodge structure $\cal H_{Hg}$.
Let $\si$ be a complex embedding of $F$.
We define the bar complexes $\cal B_b$ and $\cal B_{dR}$ by 
$$
\cal B_b=B(N,AC^\bullet) \,\text{ and }\, \cal B_{dR}=B(N,F).
$$

We introduce the weight filtration $W_\bullet$ on 
$\cal B_b$ and
$\cal B_{dR}$ as follows.
\index{$W_n\cal B_b, W_n\cal B_{dR}$} 
$$
 W_n\cal B_b= 
\bigoplus_{2(r_1+\cdots +r_s)
\leq n,s\geq 0,\,r_i>0}N_{r_1}\otimes \cdots \otimes N_{r_s}\otimes AC^\bullet.
$$
$$
 W_n\cal B_{dR}= 
\bigoplus_{2(r_1+\cdots +r_s)\leq n,s\geq 0,\,r_i>0}N_{r_1}\otimes
\cdots \otimes N_{r_s} \otimes F.
$$
The
Hodge filtration $F^\bullet$ on
$\cal B_{dR}$ is defined by 
$$
F^p\cal B_{dR}=
\bigoplus_{r_1+\cdots +r_s\geq p,s\geq 0,\,r_i>0}
N_{r_1}\otimes
\cdots \otimes N_{r_s} \otimes F.
$$
We have a canonical isomorphism 
$$\text{gr}^W_{2r}\cal B_b=
\bigoplus_{r_1+\cdots +r_s=r,\,s\geq 0,\,r_i>0}N_{r_1}\otimes \cdots \otimes N_{r_s}\otimes AC^\bullet
$$
resp.
$$\text{gr}^W_{2r}{\cal B}_{dR}=
\bigoplus_{r_1+\cdots +r_s=r,\,s\geq 0,\,r_i>0}N_{r_1}\otimes \cdots \otimes N_{r_s}\otimes
F.$$

We define the comparison map $c:\,\,\cal B_b\to
{\cal B}_{dR}\otimes_{F}\CC $ to be $\text{id}\otimes (2\pi i)^{-r} I$
on $\text{gr}^W_{2r}\cal B_b$. By Proposition \ref{properties of tc} (3), the comparison map $c$ induces a homomorphism of complexes
from $\cal B_b$ to ${\cal B}_{dR}\otimes_{F}\CC$. 

\begin{definition}
\label{def of universal mixed Hodge structure}
 We define the Betti part $\cal H_b$
and  the de Rham part $\cal H_{dR}$
of $\cal H_{Hg}$
by $\cal H_b=H^0(\cal B_b)$ and 
$\cal H_{dR}=H^0(\cal B_{dR})$.
\index{$\cal H_{B}, \cal H_{dR}$}

\end{definition}
By 
Proposition \ref{properties of tc} (3), 
the map $c$ induces  a homomorphism of complexes
\index{$F^p\cal B_{dR}$}
\begin{equation}
\label{weight compatible}
c:\, W_n\cal B_b\ot\CC \to W_n\cal B_{dR}\otimes_{F}\CC.
\end{equation}
Here we regard $F$ as a subfield of $\CC$ by $\si$. The weight and Hodge filtrations on 
$\cal B_b$ and 
$\cal B_{dR}$
induce those of 
$\cal H_b$ and
$\cal H_{dR}$, respectively.  For $\cal H_{dR}$  these filtrations are simple.  We have
\[W_n \cal H_{dR}=\oplus_{2r\leq n} (\chi_N)_r\otimes {F}, \text{ and }
F^p \cal H_{dR}=\oplus_{r\geq  p} (\chi_N)_r\otimes {F}.\]

\begin{proposition}[Proposition-Definition 5.5 \cite{part II}]
\label{hodge tate}
\begin{enumerate}
\item The map $c$ induces a filtered quasi-isomorphism
$\cal B_b\otimes \CC\to \cal B_{dR}\otimes \C$ with respect to $W$.
\item
\label{gr of r is degree r part}
We have a canonical isomorphism of vector spaces
$$
Gr^W_{2r}\cal H_b\to (\chi_N)_r
$$
\item
The isomorphism $c:\cal H_b\otimes \CC\to \cal H_{dR}\otimes \C$ induces an isomorphism of weight filtered
 vector spaces from $\cal H_b\otimes \C$ to $\cal H_{dR}\otimes \C$.

\end{enumerate}

\end{proposition}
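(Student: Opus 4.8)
The plan is to deduce all three statements from the behaviour of the comparison map $c$ on the weight-graded pieces of the bar complexes, using only the four properties of $AC^\bullet$ collected in Proposition \ref{properties of tc}. As recorded in the text, for each $r\ge 0$ there are identifications of complexes
\[
\mathrm{gr}^W_{2r}\cal B_b\;\cong\;B(N)_r\otimes_{\QQ}AC^\bullet,
\qquad
\mathrm{gr}^W_{2r}\cal B_{dR}\;\cong\;B(N)_r\otimes_{\QQ}F,
\]
and $\mathrm{gr}^W_{2r+1}=0$; the differential on the right-hand side is the total differential of the tensor product, with no cross term, because the only component of the differential of $\cal B_b=B(N,AC^\bullet)$ that would multiply the last bar entry into the $AC^\bullet$-factor strictly lowers the weight (and the term $-\epsilon(a_1)[a_2|\cdots]m$ vanishes since $a_1\in N_+$). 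Under these identifications, after extending scalars to $\CC$, the map $c$ becomes on $\mathrm{gr}^W_{2r}$ the map $\id_{B(N)_r}\otimes(2\pi i)^{-r}I_{\CC}$, where $I_{\CC}\colon AC^\bullet\otimes_{\QQ}\CC\to\CC$ is the map of Proposition \ref{properties of tc}(4).

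For part (1): since $B(N)_r$ is a bounded-below complex of $\QQ$-vector spaces and $I_{\CC}$ is a quasi-isomorphism by Proposition \ref{properties of tc}(4), the map $\id\otimes(2\pi i)^{-r}I_{\CC}$ is a quasi-isomorphism for every $r$; that is, $\mathrm{gr}^W(c\otimes\CC)$ is a quasi-isomorphism. The filtration $W$ on both $\cal B_b$ and $\cal B_{dR}$ is exhaustive with $W_{-1}=0$, so the comparison theorem for spectral sequences of filtered complexes shows that $c\otimes\CC\colon\cal B_b\otimes\CC\to\cal B_{dR}\otimes\CC$ is a filtered quasi-isomorphism with respect to $W$; this is (1). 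Statement (3) is then formal: by flatness of $\CC$ over $\QQ$ one has $\cal H_b\otimes\CC=H^0(\cal B_b\otimes\CC)$ and $\cal H_{dR}\otimes\CC=H^0(\cal B_{dR}\otimes\CC)$, and a filtered quasi-isomorphism induces an isomorphism on $H^0$ which is strictly compatible with the induced weight filtrations, hence an isomorphism of weight-filtered vector spaces.

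For part (2), I would compute $\mathrm{Gr}^W_{2r}\cal H_b=\mathrm{Gr}^W_{2r}H^0(\cal B_b)$ from the weight spectral sequence of $\cal B_b$, whose $E_1$-term in weight $2r$ is $H^\bullet(\mathrm{gr}^W_{2r}\cal B_b)=H^\bullet(B(N)_r\otimes AC^\bullet)$. By Proposition \ref{properties of tc}(1) the inclusion $u\colon\QQ\to AC^\bullet$ is a quasi-isomorphism, so $B(N)_r\otimes AC^\bullet\simeq B(N)_r$, and it remains to see that $H^p(B(N)_r)$ vanishes for $p\ne 0$, its value at $p=0$ being $(\chi_N)_r$ by definition. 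Vanishing for $p<0$ is the usual cohomological connectedness of the bar construction of the connected cdga $N$. Vanishing for $p>0$ uses that $F$ is a number field: filter $B(N)_r$ by the number of bars (a finite filtration, since $r_i\ge 1$ forces $s\le r$); by Borel's theorem, recalled via (\ref{cohomN}), $H^\bullet(N_{r_i})$ is concentrated in degree $1$ for $r_i\ge 1$, so by the K\"unneth formula the $d_I$-cohomology of the $s$-bar summand $\bigoplus_{r_1+\cdots+r_s=r}N_{r_1}\otimes\cdots\otimes N_{r_s}$ is concentrated in cohomological degree $0$ (each of the $s$ tensor factors contributing degree $1$, against the bar-complex shift by $s$), whence the $E_1$-page of this bar filtration, and therefore $H^\bullet(B(N)_r)$, is concentrated in degree $0$. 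Consequently the weight spectral sequence of $\cal B_b$ has $E_1$ concentrated in the $0$-th row with finite-dimensional entries $(\chi_N)_r$ (again because $F$ is a number field), so it degenerates at $E_1$ and its edge map yields the canonical isomorphism $\mathrm{Gr}^W_{2r}\cal H_b\xrightarrow{\ \sim\ }H^0(\mathrm{gr}^W_{2r}\cal B_b)=(\chi_N)_r$.

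The step I expect to cost the most care is part (2): pinning down the identification $\mathrm{gr}^W_{2r}\cal B_b\cong B(N)_r\otimes AC^\bullet$ with the correct signs and verifying that indeed no component of the differential crosses weights on the graded piece, and then justifying the degeneration of the weight spectral sequence --- that there is no room for higher differentials entering or leaving the $0$-th row --- which is precisely where the number-field hypothesis enters, through Borel vanishing and the resulting finite-dimensionality of each $(\chi_N)_r$. (Alternatively, once (1) and (3) are in hand, $\mathrm{Gr}^W_{2r}\cal H_b$ and $(\chi_N)_r$ have the same finite dimension, and the canonical maps through $H^0(\mathrm{gr}^W_{2r}\cal B_b)$ are then forced to be isomorphisms; but the spectral-sequence argument gives the canonical isomorphism more directly.) Parts (1) and (3), by contrast, are routine once Proposition \ref{properties of tc}(1) and (4) are available.
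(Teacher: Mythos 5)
Your treatment of parts (1) and (3) matches the paper's: identify $\mathrm{gr}^W_{2r}\cal B_b\cong B(N)_r\otimes AC^\bullet$, note that $c$ becomes $\id\otimes(2\pi i)^{-r}I_\CC$ on graded pieces, invoke Proposition \ref{properties of tc}(4), and then run the standard comparison of filtered spectral sequences. For part (2), though, you take a genuinely different and more expensive route. You establish degeneration of the weight spectral sequence of $\cal B_b$ by first reducing $\mathrm{gr}^W_{2r}\cal B_b$ to $B(N)_r$ via Proposition \ref{properties of tc}(1) and then proving, via the (finite) bar-length filtration, Borel's theorem through (\ref{cohomN}), and K\"unneth, that $H^p(B(N)_r)=0$ for $p\neq 0$ --- so that the $E_1$-page sits in a single total degree and degeneration is forced. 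This is a correct argument (and the degree bookkeeping, $s$ tensor factors each in cohomological degree $1$ against the shift by $s$, is right), but it uses the number-field hypothesis in an essential way. The paper gets degeneration more cheaply and more generally: it observes that $\cal B_{dR}=\bigoplus_r B(N)_r\otimes F$ \emph{splits} its weight filtration (the augmentation kills the one component of $d_E$ that would drop weight), so the $'E$-spectral sequence trivially degenerates at $E_1$, and by the filtered quasi-isomorphism of part (1) the same follows for $\cal B_b\otimes\CC$ and hence, by exactness of $\otimes\CC$, for $\cal B_b$. In other words, the paper does not need any vanishing of $H^p(B(N)_r)$ for $p>0$ at this step; what you proved is a stronger statement than what the argument requires. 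Your alternative remark (dimension counting once (1) and (3) are known) is closer in spirit to the paper's economy, but the splitting of $\cal B_{dR}$ is the cleanest path and is worth having in your toolkit.
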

\begin{proof} 
(1)  One sees that the quotient $Gr_{2r}^W\cal B_B$  is the tensor product
\[ 
B(N)_r\ot AC^\bullet
\]
as a complex. By Proposition \ref{properties of tc} (4) the map $c$ induces a quasi-isomorphism
\newline $Gr_{2r}^W\cal B_B\ot \CC\to Gr_{2r}^W\cal B_{dR}\otimes \C$.
 
(2)
 We consider the spectral sequences for the filtration $W$:
 \begin{align*}
E_1^{p,q}=H^{p+q}(Gr^W_{-p}\cal B_B) &\Rightarrow E^{p+q}=
 H^{p+q}(\cal B_B) \\
\ '  E_1^{p,q}=H^{p+q}(Gr^W_{-p}\cal B_{dR}\otimes_F\CC) &\Rightarrow \ 'E^{p+q}=
 H^{p+q}(\cal B_{dR}\otimes_F\CC). 
 \end{align*}
Since the morphism of complexes $c:\,\cal B_B\otimes \CC\to \cal B_{dR}\otimes_F\CC$
 is a filtered quasi-isomorphism, the map
 
  $$E_1^{p,q}\otimes \CC \to \ 'E_1^{p,q}$$
induced by $c$  is an isomorphism. Note that we have 
$$E_1^{p,q}\ot \C=H^{p+q}(Gr^W_{-p}({\cal B}_B\ot \C))$$ since tensoring with $\C$ is an exact functor.
Since the complex $\cal B_{dR}$ is  the direct sum 
$\dis\underset{r}\oplus B(N)_r\ot F$, 
the spectral sequence $\ 'E_*^{*,*}$ degenerates at $E_1$-term, and
as a consequence  $E_*^{*,*}$ also degenerates at $E_1$-term.
 Therefore the vector space
$Gr_{2r}^WH^0(\cal B_B)$ is canonically isomorphic to $H^0(Gr_{2r}^W\cal B_B)$.
By Proposition \ref{properties of tc} (1) we have
$$
H^0(Gr^W_{2r}\cal B_B)
={\chi_N}_r\ot H^0(AC^\bullet)={\chi_N}_r\otimes \Q.
$$
The assertion (3)  also follows from this argument.
\end{proof}


Let $V=\oplus_r V_r$ be an Adams graded $\Q$-vector space of finite dimension.
 The mixed Tate 
Hodge structure associated to $V$ which is denoted by
$V_{Hg}$, is defined as follows. $V_{Hg}$ is the triple $(V_b, V_{dR},c)$.
$V_b=V$ as a $\Q$-vector space, and the weight filtration is defined
so that $W_nV_b=\oplus_{2r\leq n}V_r$. $V_{dR}=V\otimes \C$ as a vector space,
and  $W_nV_{dR}=\oplus_{2r\leq n}V_r\ot \C$. The Hodge filtration
is defined by $F^pV_{dR}=\oplus_{r\geq p} V_r\ot \C$. We have a canonical isomorphism
$\text{gr}^W_{2r} V_b=V_r$. The comparison map $c$ is defined on 
$\text{gr}^W_{2r} V_b=V_r$ to be the multiplication by $(2\pi i)^{-r}$.  
Let $V$ be an Adams graded ${\chi_N}$-co-module.
The coaction $\Delta_V:\,V\to V\ot {\chi_N}$ induces a map of
mixed Tate Hodge structures
$$\Delta_{V_{Hg}}:\,\,V_{Hg}\to V_{Hg}\ot ({\chi_N})_{Hg}.$$
Similarly the coproduct 
$$\Delta_b:\,{\cal H}_b\to \chi_N
\ot\cal H_b$$ 
and
$$\Delta_{dR}:\,{\cal H}_{dR}\to \chi_N
\ot {\cal H}_{dR}$$ 
induce a map of mixed Tate Hodge structures
$$\Delta_{Hg}:\,\,{\cal H}_{Hg}\to ({\chi_N})_{Hg}\ot {\cal H}_{Hg}.$$

\begin{proposition-definition}[Realization functor]
\label{defphi}
Let $V$ be an object of ${\rm MT_{BK}}$. Let

\[
\Phi(V)_b=\ker(\Delta_V\otimes \id- \id\otimes \Delta_{{\cal H}_b}:\,\,
V\otimes\cal H_b\to V\ot \chi_N \ot\cal H_b)\]
and
\[
\Phi(V)_{dR}=\ker(\Delta_V\otimes \id- \id\otimes \Delta_{{\cal H}_{dR}}:\,\,
V\otimes {\cal H}_{dR}\to V\ot \chi_N \ot {\cal H}_{dR}).\]
Then there is an isomorphism of weight filtered vector spaces
\[c:  \Phi(V)_b\otimes \C\to \Phi(V)_{dR}\]
 induced by the isomorphism
$c:\cal H_b\otimes \C\to {\cal H}_{dR}\otimes \C$  given in {\rm Proposition \ref{hodge tate} (3)}.
The triple $\Phi(V):=(\Phi(V)_b,  \Phi(V)_{dR}, c)$ is a mixed Tate Hodge structure. $\Phi(V)$ 
is defined to be the Hodge realization of $V$.
\end{proposition-definition}
\begin{proof}
By Proposition \ref{hodge tate} (3) the map $c$ induces an isomorphism of weight filtered
vector spaces $c:  \Phi(V)_b\otimes \C\to \Phi(V)_{dR}\otimes\C.$ By Proposition \ref{Kerd}
$\Phi(V)_{dR}\otimes\C$ is isomorphic to $\oplus V_i\otimes \C$  with the weight and Hodge filtrations
defined in terms of Adams grading.  Hence $\Phi(V)$ is a mixed Tate Hodge structure.
\end{proof}


\subsection{Hodge realization of the category of flat connections}
\label{flat conn}
We will show that the Hodge realization functor $\Phi$ given in Definition \ref{defphi} is equivalent to a simpler functor.
As we have said the category $\rm MR_{BK}$ is equivalent to the category of flat connections
 $\text{Conn}^{0f}_{N\{1\}}$.
We will show that there is a natural Hodge realization of $\text{Conn}^{0f}_{N\{1\}}$,
which is compatible with the functor $\Phi$ defined  in Definition \ref{defphi} under the equivalence of the categories
${\rm MT_{BK}}=\text{co-rep}^f(\chi_N)\sim \text{Conn}^{0f}_{N\{1\}}$. Let $M=\oplus_r M_r$ be a finite dimensional Adams graded
$\Q$-vector space with a flat connection $\Ga:\,\,M\to M\otimes N\{1\}^1$. The Hodge complex associated to
$M$ is the triple $(M_b,\,M_{dR},\,c)$ defined as follows. $M_b=M\otimes AC^\bullet$ as a bigraded $\Q$-vector space.
Here  $AC^\bullet$ is the complex of semi-algebraic chains defined in Definition \ref{def:admissible triple complex}. The differential of $M_b$
is the sum $d_1+d_2$ where $d_1=1\otimes d_{AC^\bullet}$ and $d_2$ is the composite $M\otimes AC^\bullet
\overset{\Ga\otimes \text{id}}\to M\otimes N\{1\}^1\otimes AC^\bullet \overset{\text{id}\otimes m} \to M\otimes AC^\bullet$.
Here $m$ is the multiplication $ N\{1\}^1\otimes AC^\bullet\to AC^\bullet$. The weight filtration
of $M_b$ is defined by 
$$W_nM_b=\bigoplus_{2r\leq n}M_r\otimes AC^\bullet.$$
The complex $M_{dR}:=M\otimes F$ concentrated in cohomological degree 0 with trivial differential. 
The weight filtration of $M_{dR}$ is defined by
$$W_nM_{dR}=\bigoplus_{2r\leq n}M_r\otimes F$$
and the Hodge filtration of $M_{dR}$
is defined by $$F^pM_{dR}=\bigoplus_{p\leq r} M_r\otimes F.$$ 
We have a canonical isomorphism 
$$\text{gr}^W_{2r}(M_b)\simeq M_r\otimes AC^\bullet$$
and
$$\text{gr}^W_{2r}(M_{dR})\simeq M_r\otimes F.$$
The comparison map $c:\,\,M_b\to M_{dR}\otimes_{F}\CC$ is defined to be $\text{id}\otimes(2\pi i)^{-r} I$ on
$M_r\ot AC^\bullet$ where $I:\,AC^\bullet \to \C$
is the map defined in Proposition \ref{properties of tc} (3).

The proof of the following proposition is similar to that of Proposition \ref {hodge tate}.
\begin{proposition}
\label{hodgeconn}
The triple $(H^0(M_b),\,H^0(M_{dR})(=M_{dR}),\,c)$ is a mixed Tate Hodge structure. 
\end{proposition}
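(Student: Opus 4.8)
The plan is to imitate the proof of Proposition \ref{hodge tate}, exploiting the weight filtration $W_\bullet M_b$ and the fact that $M_{dR}$ is a direct sum of its graded pieces. First I would observe that, as a complex, the graded quotient $\mathrm{gr}^W_{2r}(M_b)$ is isomorphic to $M_r\otimes AC^\bullet$ with differential $1\otimes d_{AC^\bullet}$, since the term $d_2$ shifts the weight strictly downward (it replaces a factor of $M_r$ by a factor of $M_{r'}\otimes N\{1\}^1$ with $r'<r$, the Adams degree of $N\{1\}^1$ being positive, exactly as recorded in the last paragraph of Section \ref{MT}). Likewise $\mathrm{gr}^W_{2r}(M_{dR})\simeq M_r\otimes F$ with zero differential. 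By Proposition \ref{properties of tc}(4), the comparison map $c$ restricted to this graded piece, namely $\mathrm{id}\otimes (2\pi i)^{-r} I_\CC$, is a quasi-isomorphism $M_r\otimes AC^\bullet\otimes\CC\to M_r\otimes F\otimes\CC$; hence $c\colon M_b\otimes\CC\to M_{dR}\otimes_F\CC$ is a filtered quasi-isomorphism with respect to $W$.

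Next I would run the two spectral sequences associated to the weight filtrations on $M_b$ and on $M_{dR}\otimes_F\CC$, abutting to $H^\bullet(M_b)$ and $H^\bullet(M_{dR}\otimes_F\CC)$ respectively. Since $M_{dR}$ carries the trivial differential and splits as the direct sum $\bigoplus_r M_r\otimes F$, its spectral sequence degenerates at $E_1$. As $c$ induces an isomorphism on $E_1$-terms (after tensoring with $\CC$, which is exact), the spectral sequence for $M_b$ also degenerates at $E_1$. Consequently $\mathrm{gr}^W_{2r}H^0(M_b)$ is canonically isomorphic to $H^0(\mathrm{gr}^W_{2r}M_b)=M_r\otimes H^0(AC^\bullet)=M_r\otimes\QQ$, using Proposition \ref{properties of tc}(1); and $H^0(M_{dR})=M_{dR}=\bigoplus_r M_r\otimes F$, with $\mathrm{gr}^W_{2r}H^0(M_{dR})=M_r\otimes F$. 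Therefore $c$ induces an isomorphism of weight-filtered vector spaces $H^0(M_b)\otimes\CC\isoto H^0(M_{dR})\otimes\CC$, and on each graded piece it is multiplication by $(2\pi i)^{-r}$; combined with the Hodge filtration $F^pM_{dR}=\bigoplus_{p\leq r}M_r\otimes F$ this shows the triple $(H^0(M_b),H^0(M_{dR}),c)$ has weight-$2r$ graded quotient of type $(-r,-r)$, i.e. it is a mixed Tate Hodge structure.

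The one point requiring genuine care — the main obstacle — is verifying that $d_1+d_2$ actually squares to zero and is compatible with $W$, i.e. that the bigraded object $M_b$ is a well-defined complex with a weight filtration by subcomplexes. Here $d_1^2=0$ is clear; $d_1d_2+d_2d_1=0$ follows from the derivation formula in Proposition \ref{properties of tc}(2) together with $d(N\{1\}^1)\subset \wedge^2 N\{1\}^1$ and the module structure; and $d_2^2=0$ is precisely the flatness condition $d\Gamma+\Gamma^2=0$ translated through the $N\{1\}$-action on $AC^\bullet$ (which factors through the cdga map $N\{1\}\to AC^\bullet$). Once this is in place, that $d_1$ preserves $W$ is immediate and that $d_2$ preserves $W$ (indeed strictly decreases it) is the Adams-degree observation already used above, so $W_nM_b$ is a subcomplex and the spectral-sequence argument applies verbatim. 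I would state these verifications briefly and refer to Proposition \ref{properties of tc} for the module identities, then conclude as above.
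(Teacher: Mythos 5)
The main body of your proof is correct and is exactly the fleshing-out the paper intends when it says the proof is ``similar to that of Proposition~\ref{hodge tate}'': you identify $\mathrm{gr}^W_{2r}(M_b)$ with $M_r\otimes AC^\bullet$ carrying the differential $1\otimes d_{AC^\bullet}$ (valid because $\Gamma$ has Adams degree $0$ and $(N\{1\}^1)_r=0$ for $r\le 0$, so $d_2$ strictly lowers the weight), deduce from Proposition~\ref{properties of tc}(4) that $c$ is a filtered quasi-isomorphism, and then use that $M_{dR}$ is a direct sum with trivial differential to force $E_1$-degeneration of both weight spectral sequences, giving $\mathrm{gr}^W_{2r}H^0(M_b)\simeq M_r\otimes H^0(AC^\bullet)=M_r\otimes\QQ$ and hence a mixed Tate Hodge structure of type $(-r,-r)$ on each graded piece.

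Your side verification that $(d_1+d_2)^2=0$, however, is stated incorrectly: you claim $d_1d_2+d_2d_1=0$ and $d_2^2=0$ hold \emph{separately}. Neither holds in general. By the derivation formula of Proposition~\ref{properties of tc}(2), one computes that $d_1d_2+d_2d_1$ is the operator on $M\otimes AC^\bullet$ induced by $d\Gamma$ (apply $\Gamma$, then the differential $N\{1\}^1\to N\{1\}^2$, map to $N^2$, and multiply into $AC^\bullet$), while $d_2^2$ is the operator induced by $\Gamma^2$. The flatness condition $d\Gamma+\Gamma^2=0$ therefore kills the \emph{sum} of these two terms; there is no reason for either $d\Gamma$ or $\Gamma^2$ to vanish on its own, and in general they do not. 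The conclusion that $d_1+d_2$ squares to zero is still correct (for the right reason), so your main spectral-sequence argument is unaffected, but this particular step should be fixed so that the cancellation is attributed to $d\Gamma+\Gamma^2=0$ as a single identity rather than to two separate vanishings.
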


\begin{definition}
\label{defpsi}
The Hodge realization $\Psi(M)$ of $M$ is defined to be the triple
\newline $(H^0(M_b),\,H^0(M_{dR})(=M_{dR}),\,c)$. 
\end{definition}

We will show that the  the Hodge realizations $\Psi$ and $\Phi$ 
defined in definition \ref{defphi}  are equivalent.
 By iterating the map $\Ga$ we obtain a map $\Ga_n:\,M\to M\otimes (N\{1\}^1)^{\otimes n}$
for each $n\geq 0$. Taking the sum $\sum_{n\geq 0}\Ga_n$, we obtain a map
$\Delta_M:\,\, M
\to M\otimes T(N\{1\}^1)$. Here $T(N\{1\}^1)=\oplus_{s\geq 0}(N\{1\}^1)^{\ot s}$. 
Since the connection $\Ga$ is flat, $\Delta_M$ is actually a map
$M\to M\otimes H^0(B(N\{1\}))$. The maps $\Delta_M$ induces an equivalence of the categories
\[ \text{Conn}^{0f}_{N\{1\}}\to \text{co-rep}^f(\chi_A).\]
Let $\Delta_{M_b}$ be the map
\[M\otimes AC^\bullet\overset{\Delta_M\otimes 1}\to M\otimes T(N\{1\}^1)\otimes AC^\bullet.\]
It follows from the definition of the bar complex $B(N,AC^\bullet)$ that $\Delta_{M_b}$ defines a map
of complexes
\[\Delta_{M_b}:\,\,M_b\to M\otimes B(N,AC^\bullet)=M\otimes \cal B_b.\]
Here the $\Q$-vector space $M$ on the target is a trivial complex.
Similarly, we have the map of complexes
\[\Delta_{M_{dR}}:\,\,M_{dR}\to M\otimes B(N,F)=M_{dR}\otimes \cal B_{dR}.\]

\begin{theorem}
\label{comparison}
The pair $(\Delta_{M_b},\Delta_{M_{dR}})$ induces a map of mixed Hodge 
structures from $\Psi(M)$ to $\Phi(M)$ 
which is an isomorphism.
\end{theorem}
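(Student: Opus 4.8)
The plan is to compare the two Hodge realizations $\Phi(M)$ and $\Psi(M)$ term by term, using the fact that the de Rham side is essentially formal and that the map $c$ is controlled on weight-graded pieces. First I would check that $(\Delta_{M_b},\Delta_{M_{dR}})$ is compatible with weight and Hodge filtrations: by construction $W_n M_b=\bigoplus_{2r\le n}M_r\otimes AC^\bullet$ maps into $W_n\cal B_b\otimes M$, since $\Delta_M$ sends $M_r$ into $M_r\otimes (N\{1\}^1)^{\otimes\bullet}$ while raising the weight only by the total Adams degree of the bar factors; the same bookkeeping gives compatibility with $F^\bullet$ on the de Rham side and with the comparison maps $c$, because both $c$'s are defined by $\mathrm{id}\otimes(2\pi i)^{-r}I$ on $\mathrm{gr}^W_{2r}$ and the square involving $\Delta$ and $c$ commutes at the level of chains by Proposition \ref{properties of tc}(3). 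Hence $(\Delta_{M_b},\Delta_{M_{dR}})$ passes to a morphism of mixed Tate Hodge structures $\Psi(M)\to\Phi(M)$.

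Next I would prove that $\Delta_{M_b}\colon M_b\to M\otimes\cal B_b$ and $\Delta_{M_{dR}}\colon M_{dR}\to M\otimes\cal B_{dR}$ are filtered quasi-isomorphisms with respect to $W$. As in the proof of Proposition \ref{hodge tate}, it suffices to check this on $\mathrm{gr}^W_{2r}$. On the de Rham side, $\mathrm{gr}^W_{2r}(M\otimes\cal B_{dR})=M_r\otimes B(N)_r\otimes F$, but $H^0$ of this computes $M_r\otimes(\chi_N)_r$, and the map $\Delta_M$ on weight-graded pieces is exactly the isomorphism $M_r\xrightarrow{\sim}M_r\otimes(\chi_N)_0\hookrightarrow\bigoplus_sM_{r'}\otimes\cdots$ that underlies the equivalence $\text{Conn}^{0f}_{N\{1\}}\sim\text{co-rep}^f(\chi_N)$ of \cite{KM}; so on $H^0$ of $\mathrm{gr}^W_{2r}$ it is an isomorphism. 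On the Betti side, $\mathrm{gr}^W_{2r}M_b=M_r\otimes AC^\bullet$ and $\mathrm{gr}^W_{2r}(M\otimes\cal B_b)=M_r\otimes B(N)_r\otimes AC^\bullet$; since $AC^\bullet$ is quasi-isomorphic to $\QQ$ by Proposition \ref{properties of tc}(1), both sides compute $M_r\otimes H^0(B(N)_r)=M_r\otimes(\chi_N)_r$ in degree $0$, and the induced map is the same isomorphism as on the de Rham side. Thus $\Delta_{M_b}$ and $\Delta_{M_{dR}}$ are $W$-filtered quasi-isomorphisms.

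From here the argument is formal. A filtered quasi-isomorphism induces isomorphisms on the associated spectral sequences for $W$; since (as in Proposition \ref{hodge tate}) the de Rham spectral sequence degenerates at $E_1$ because $\cal B_{dR}$ is a direct sum over $r$, so does the Betti one, and $\Delta_{M_b}$ induces an isomorphism $H^0(M_b)\xrightarrow{\sim}\Phi(M)_b$ of weight-filtered $\QQ$-vector spaces; here I identify $H^0(M\otimes\cal B_b)$ with $\Phi(M)_b$ via Proposition \ref{Kerd} (the kernel of $\Delta_V\otimes\mathrm{id}-\mathrm{id}\otimes\Delta_{\cal H_b}$ is $\Delta_V(V)$, equivalently $H^0$ of the appropriate bar complex). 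Likewise $\Delta_{M_{dR}}$ gives an isomorphism $M_{dR}=H^0(M_{dR})\xrightarrow{\sim}\Phi(M)_{dR}$ compatible with $W$ and $F$. Since these isomorphisms are compatible with the respective comparison maps $c$ by the first paragraph, the pair is an isomorphism of mixed Tate Hodge structures $\Psi(M)\xrightarrow{\sim}\Phi(M)$.

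The main obstacle is the middle step: verifying that the chain-level map $\Delta_{M_b}$ is genuinely a map of complexes and that on $\mathrm{gr}^W_{2r}$ it realizes, after passing to $H^0$, the co-module equivalence of \cite{KM} compatibly with the augmentation $\epsilon\colon AC^\bullet\to\QQ$ (equivalently the quasi-isomorphism $u$ of Proposition \ref{properties of tc}(1)). This requires unwinding the flatness identity $d\Ga+\Ga^2=0$ to see that $d_1+d_2$ on $M_b$ matches the bar differential $d_I+d_E$ on $M\otimes B(N,AC^\bullet)$ under $\Delta_M\otimes 1$ — a sign-bookkeeping computation analogous to the one showing $\Delta_M$ lands in $H^0(B(N\{1\}))$ — together with the fact, from \cite{KM}, that the 1-minimal model map $N\{1\}\to N$ induces an isomorphism $H^0(B(N\{1\}))\xrightarrow{\sim}H^0(B(N))=\chi_N$ on bar cohomology, so that $B(N,AC^\bullet)$ built from $N$ and $B(N\{1\})$ built from $N\{1\}$ give the same answer on $H^0$ of weight-graded pieces. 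Once this identification is in place, everything else is an application of results already proved in the excerpt.
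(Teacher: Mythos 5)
There is a genuine gap in the middle of your argument. You claim that $\Delta_{M_b}\colon M_b\to M\otimes\cal B_b$ and $\Delta_{M_{dR}}\colon M_{dR}\to M\otimes\cal B_{dR}$ are filtered quasi-isomorphisms. This is false, and the error shows already in your computation of graded pieces. On the de Rham side one has
\[
\mathrm{gr}^W_{2r}(M\otimes\cal B_{dR})=\bigoplus_{r'+r''=r} M_{r'}\otimes B(N)_{r''}\otimes F,
\]
not $M_r\otimes B(N)_r\otimes F$, and its $H^0$ is $\bigoplus_{r'} M_{r'}\otimes(\chi_N)_{r-r'}$, which is strictly larger than $\mathrm{gr}^W_{2r}(M_{dR})=M_r\otimes F$ as soon as some $(\chi_N)_s$ with $s>0$ is nonzero; the same problem occurs on the Betti side where $\mathrm{gr}^W_{2r}(M_b)=M_r\otimes AC^\bullet$ computes only $M_r$ in degree zero, not $M_r\otimes(\chi_N)_r$. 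Indeed $H^0(M\otimes\cal B_b)=M\otimes\cal H_b$ is infinite-dimensional, while $H^0(M_b)$ is finite-dimensional, so no quasi-isomorphism is possible. You even flag this tension yourself in writing ``$M_r\overset{\sim}{\to}M_r\otimes(\chi_N)_0\hookrightarrow\cdots$'' and then concluding ``it is an isomorphism.'' Likewise the identification ``$H^0(M\otimes\cal B_b)$ with $\Phi(M)_b$ via Proposition~\ref{Kerd}'' is incorrect: $\Phi(M)_b$ is the kernel of $\Delta_M\otimes\mathrm{id}-\mathrm{id}\otimes\Delta_{\cal H_b}$, which is a proper subspace of $M\otimes\cal H_b$.

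The paper's argument avoids this by never claiming a quasi-isomorphism onto the whole bar complex. It observes first that, because $M$ is a $\chi_N$-comodule, the image of $\Delta_{M_b}$ on $H^0$ lands inside the kernel $\Phi(M)_b\subset M\otimes\cal H_b$; the task is then to show that the induced map $H^0(M_b)\to\Phi(M)_b$ is an isomorphism of weight-filtered vector spaces. For this one tensors with $\C$ and uses Proposition~\ref{hodge tate}(3) to transport the problem from the Betti side $M\otimes H^0(B(N,AC^\bullet))\otimes\C$ to the de Rham side $M\otimes H^0(B(N,\C))$, compatibly with weight filtrations. On the de Rham side the map in question is precisely $m\mapsto\Delta_M(m)$ into $\Delta_M(M)=\ker(\Delta_M\otimes\mathrm{id}-\mathrm{id}\otimes\Delta_{\chi_N})$, which is an isomorphism by Proposition~\ref{Kerd}. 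Faithful flatness of $\Q\hookrightarrow\C$ then gives the isomorphism over $\Q$. Your first paragraph (compatibility with filtrations and $c$) and your concern about the chain-level assertion that $\Delta_{M_b}$ is a map of complexes are reasonable preliminaries, but the core step needs to be the kernel identification via Propositions~\ref{hodge tate}(3) and~\ref{Kerd}, not a filtered quasi-isomorphism of bar complexes.
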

\begin{proof} Since $M$ is a co-module over ${\chi_N}$, $\Delta_{M_b}(M_b)$
is contained in the kernel of the map 
$$\Delta_M\otimes 1-1\otimes \Delta_b:\,\,
M\otimes B(N,AC^\bullet)\to M\otimes B(N)\otimes B(N,AC^\bullet).$$
Hence $\Delta_{M_b}$ induces a map from $H^0(M_b)$ to
$\Phi(M)_b$.  We need to show that it is an isomorphism of weight filtered
vector spaces. 
By Proposition \ref{hodge tate}  (3), we have an isomorphism of weight 
filtered vector spaces
$$M\otimes H^0(B(N,AC^\bullet))\otimes \C
\to M\otimes H^0(B(N,\C))$$
resp. 
$$M\otimes {\chi_N}\otimes H^0(B(N,AC^\bullet))\otimes \C
\to M\otimes {\chi_N}\otimes H^0(B(N,\C)).$$
So our assertion is follows from the fact that the map
induced by $\Delta_{M_{dR}}$

$$H^0(M_{dR}\otimes\C)=M\otimes \C
\to \text{Ker}(\Delta_M\otimes 1-1\otimes \Delta:\,\,
M\otimes H^0(B(N,\C))\to M\otimes {\chi_N}\otimes H^0(B(N,\C)))$$
is an isomorphism of weight filtered vector spaces. This is true by Proposition \ref{Kerd}.

\end{proof}

\begin{theorem}
\label{tensor}
The functor $\Psi$ is compatible with the tensor product.
\end{theorem}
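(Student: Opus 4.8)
The plan is to reduce the assertion to a direct check on the three pieces of data (the Betti complex, the de Rham complex, and the comparison map) that constitute $\Psi(M)$, using that the tensor product in $\mathrm{Conn}^{0f}_{N\{1\}}$ is given by the usual formula $\Ga_{M\otimes M'}=\Ga_M\otimes\id+\id\otimes\Ga_{M'}$ on $M\otimes M'$, and that the tensor product of mixed Tate Hodge structures is computed levelwise with the product Adams grading. First I would record the de Rham side: since $M_{dR}=M\otimes F$ with trivial differential and the filtrations are read off the Adams grading, one has $(M\otimes M')_{dR}=M_{dR}\otimes_F M'_{dR}$ on the nose, with weight and Hodge filtrations matching by the convolution formula $W_n=\sum_{a+b=n}W_a\otimes W_b$, $F^p=\sum_{a+b=p}F^a\otimes F^b$; this is immediate because $(M\otimes M')_r=\bigoplus_{s+t=r}M_s\otimes M'_t$.

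Next I would treat the Betti side. Here $M_b=M\otimes AC^\bullet$, so the natural candidate for the comparison is the map $(M\otimes M')\otimes AC^\bullet\to (M\otimes AC^\bullet)\otimes(M'\otimes AC^\bullet)$ obtained from the coproduct on the bar side, or more directly from the shuffle/diagonal structure on $AC^\bullet$; the cleanest route is to pass through the comparison with $\Phi$ provided by Theorem \ref{comparison}. Concretely, I would invoke Theorem \ref{comparison} to replace $\Psi(M)$ by $\Phi(M)$ and $\Psi(M\otimes M')$ by $\Phi(M\otimes M')$, reducing the claim to compatibility of $\Phi$ with tensor products; and $\Phi(V)$ is built as a kernel inside $V\otimes\cal H_b$ (resp.\ $V\otimes\cal H_{dR}$) of $\Delta_V\otimes\id-\id\otimes\Delta_{\cal H_b}$. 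Because $\cal H_b=H^0(\cal B_b)$ carries the coproduct $\Delta_b$ making it a $\chi_N$-comodule algebra, and $\Delta_{V\otimes V'}=(\id\otimes\id\otimes m_{\chi_N})\circ(\ldots)$ is the tensor-product coaction, the equalizer defining $\Phi(V\otimes V')_b$ is identified with $\Phi(V)_b\otimes_{\cal H_b}^{?}$... rather, with $\ker$ inside $(V\otimes V')\otimes\cal H_b$; using that $\cal H_b$ is a commutative algebra object (via the shuffle product on $\cal B_b$) one gets a natural map $\Phi(V)_b\otimes\Phi(V')_b\to\Phi(V\otimes V')_b$, $v\otimes h,\ v'\otimes h'\mapsto v\otimes v'\otimes hh'$, landing in the kernel by a direct computation with the coactions, and one checks it is an isomorphism by passing to $\gr^W$, where by Proposition \ref{hodge tate}(2) everything becomes $(\chi_N)_r$ and the statement is the comultiplicativity of $\chi_N$ together with finite-dimensionality (Proposition \ref{Kerd}). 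The de Rham version is identical but simpler since $\cal H_{dR}=\chi_N\otimes F$.

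Finally I would verify that the comparison isomorphisms $c$ for $\Psi(M)$, $\Psi(M')$ tensor together to the one for $\Psi(M\otimes M')$: on $\gr^W_{2r}$ of the tensor product this is the statement that $(2\pi i)^{-s}\otimes(2\pi i)^{-t}$ acting on $M_s\otimes M'_t$ equals $(2\pi i)^{-(s+t)}$ on $(M\otimes M')_{s+t}$, together with the fact that the integration map $I\colon AC^\bullet\to\C$ is multiplicative for the shuffle product up to the relevant normalization — this is exactly the content built into $I$ being a map of $N$-modules and a ring map in Proposition \ref{properties of tc}(3)–(4). The main obstacle I expect is the bookkeeping in the Betti part: one must pin down the correct diagonal on $AC^\bullet$ (or equivalently on $\cal B_b$) so that the map $\Phi(V)_b\otimes\Phi(V')_b\to\Phi(V\otimes V')_b$ is genuinely a morphism of complexes landing in the equalizer, and then show its $\gr^W$ is an isomorphism; once that is isolated, the de Rham and comparison parts are formal. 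I would organize the write-up as: (i) tensor structure on $\mathrm{Conn}^{0f}_{N\{1\}}$; (ii) construction of the natural transformation $\Psi(M)\otimes\Psi(M')\to\Psi(M\otimes M')$; (iii) isomorphism on $\gr^W$ via Propositions \ref{hodge tate} and \ref{Kerd}; (iv) compatibility of $c$, invoking Proposition \ref{properties of tc}.
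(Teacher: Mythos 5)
Your plan and the paper's diverge at the crucial technical point. The paper works directly on $\Psi$: it introduces a sign-twisted product $\circ$ on $AC^\bullet$, namely $a\circ b=(-1)^{d_a n_b}a\cdot b$ for $a\in AC_{d_a}(P^{n_a},\bold D^{n_a};\QQ)^{\rm alt}$ and $b\in AC_{d_b}(P^{n_b},\bold D^{n_b};\QQ)^{\rm alt}$, proves (Proposition \ref{bullet}) that $\circ$ is a chain map, a quasi-isomorphism, associative, conditionally graded-commutative, and satisfies $I(a)I(b)=I(a\circ b)$, then defines $T\colon(M_1)_b\otimes(M_2)_b\to(M_1\otimes M_2)_b$ by $T((m_1\otimes\ga_1)\otimes(m_2\otimes\ga_2))=m_1\otimes m_2\otimes(\ga_1\circ\ga_2)$, checks it is a chain map (Lemma \ref{differential}), and concludes via the weight spectral sequence. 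You instead route through Theorem \ref{comparison} to a tensor structure on $\Phi$, resting on the assertion that $\cal H_b=H^0(\cal B_b)$ is a commutative algebra ``via the shuffle product on $\cal B_b$.''

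That assertion is precisely where the gap is, and although you correctly flag it as the main obstacle, the proposal does not resolve it. A shuffle product on $\cal B_b=B(N,AC^\bullet)$ would require $AC^\bullet$ to be a differential graded algebra over $N$, not merely a DG $N$-module as Proposition \ref{properties of tc}(2) provides. The naive alternated external product $a\cdot b$ on $AC^\bullet$ is not compatible with the total differential $d=\partial+(-1)^n\delta$; the extra sign $(-1)^{d_a n_b}$ defining $\circ$ is exactly what is needed to make it a chain map, and this ingredient is missing from your plan. Without it, neither the shuffle product on $\cal B_b$ nor a direct map $T$ is a morphism of complexes, so the equalizer computation and the $\gr^W$ step in (ii)--(iii) of your outline never get started. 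A further subtlety on this route is that graded-commutativity of $\circ$ is only conditional (Proposition \ref{bullet}(3)); the paper's Lemma \ref{differential} invokes it only where $\deg n_2=1$, a parity check your plan glosses over. The de Rham side, the identification of $\gr^W$ via Propositions \ref{hodge tate} and \ref{Kerd}, and the compatibility of $c$ through multiplicativity of $I$ are sound and agree with the paper --- but the central new construction, the sign-corrected DGA structure on $AC^\bullet$, is absent.
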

\begin{proof}
We start with defining a product $\circ$ on $AC^\bullet.$
For $a\in AC_{d_a}(P^{n_a}, \bold D^{n_a}; \QQ)^{\rm alt}$
and $b\in AC_{d_b}(P^{n_b}, \bold D^{n_b}; \QQ)^{\rm alt}$,
let $a\circ b$ be $(-1)^{d_an_b}a\cdot b.$ Then we have the following.
\begin{proposition}
\label{bullet}
\begin{enumerate}
\item The product $\circ$ induces a map of complexes
\[AC^\bullet\otimes AC^\bullet\to AC^\bullet\]
which is a quasi-isomorphism.

\item The product $\circ$ is associative.

\item The graded commutativity of $\circ$ i.e. the equality
\[b\circ a=(-1)^{\deg a\deg b}a\circ b\]
holds if $d_a$ or $d_b$ is even.

\item We have the equality
\[I(a) I(b)=I(a\circ b).\]
\end{enumerate}
\end{proposition}

\begin{proof}
The proof of this proposition is rather straightforward. The facts we use in the proof are as follows.
Let $a\in AC_{d_a}(P^{n_a}, \bold D^{n_a}; \QQ)^{\rm alt}$ and
$b\in AC_{d_b}(P^{n_b}, \bold D^{n_b}; \QQ)^{\rm alt}$.

\begin{itemize}
\item  The equality 
$da=\part a+(-1)^{n_a}\delta a$ holds by definition.
\item The equality 
$$b\cdot a=(-1)^{n_an_b}a\cdot b$$
holds.
\item The equality
$$\part(a\cdot b)=\part a\cdot b+(-1)^{n_a}a\cdot \part b$$
holds.

\item  The equality  
$$\delta(a\cdot b)=\delta a\cdot b+(-1)^{d_a}a\cdot \delta b$$
holds.
\item The product $\circ$ induces an isomorphism
$$AC_0(P^0, \bold D^0; \QQ)\otimes AC_0(P^0, \bold D^0; \QQ)\to AC_0(P^0, \bold D^0; \QQ).$$

\item The integral $\dis \int_a\omega_{n_a}=0$ unless $d_a=n_a$.
\end{itemize}
\end{proof}
Let $M_1$ and $M_2$ be objects of ${\rm MT_{BK}}$ regarded as objects
of $\text{Conn}^{0f}_{N\{1\}}$. Let $T$ be the map 
\[(M_1)_b\otimes (M_2)_b\to (M_1\otimes M_2)_b\]
defined by 
\[T((m_1\otimes \ga_1)\otimes (m_2\otimes\ga_2))=m_1\otimes m_2\otimes (\ga_1\circ \ga_2).\]
\begin{lemma}
\label{differential}
$T$ is a map of complexes.
\end{lemma}
\begin{proof}
The differential of $(M_1)_b\otimes (M_2)_b$ is defined by 
\[\begin{aligned}
&d((m_1\otimes \ga_1)\otimes (m_2\otimes\ga_2))\\
=&d(m_1\otimes \ga_1)\otimes (m_2\otimes\ga_2)+(-1)^{\deg \ga_1}
(m_1\otimes \ga_1)\otimes d(m_2\otimes\ga_2).
\end{aligned}
\]
The differential $d=d_1+d_2=\text{id}\otimes d_{AC^\bullet}+m\circ (\Gamma\otimes \text{id}).$
For $d_1$ the compatibility with $T$ follows from Proposition \ref{bullet} (1).
We consider the case of $d_2$. This is a matter of sign, and we may assume
that $\Gamma(m_1)=m_1'\otimes n_1$ and  $\Gamma(m_2)=m_2'\otimes n_2$
for simplicity. Then we have
\[\begin{aligned}
&d_2((m_1\otimes \ga_1)\otimes (m_2\otimes\ga_2))\\
=&(m_1'\otimes n_1\circ \ga_1)\otimes (m_2\otimes\ga_2)+(-1)^{\deg \ga_1}
(m_1\otimes \ga_1)\otimes (m_2'\otimes n_2\circ \ga_2).
\end{aligned}
\]
So that we have 
\[\begin{aligned}
&T\circ d_2((m_1\otimes \ga_1)\otimes (m_2\otimes\ga_2))\\
=&(m_1'\otimes m_2)\otimes (n_1\circ \ga_1\circ \ga_2)+(-1)^{\deg \ga_1}
(m_1\otimes m_2')\otimes (\ga_1\circ n_2\circ \ga_2).
\end{aligned}
\]
On the other hand we have
\[\begin{aligned}
&d_2\circ T((m_1\otimes \ga_1)\otimes (m_2\otimes\ga_2))\\
=&d_2(m_1\otimes m_2)\otimes(\ga_1\circ \ga_2)\\
=& \{(m_1'\otimes n_1)\otimes m_2+m_1\otimes (m_2'\otimes n_2)\}
\otimes (\ga_1\circ \ga_2))\\
=& m_1'\otimes m_2\otimes( n_1\circ\ga_1\circ \ga_2)+m_1\otimes m_2'
\otimes( n_2\circ\ga_1\circ\ga_2).
\end{aligned}
\]
By Proposition \ref{bullet} (3) we have
\[n_2\circ \ga_1=(-1)^{\deg n_2\deg \ga_1}\ga_1\circ n_2.\]
Since $\deg n_2=1$ our assertion is proved.
\end{proof}
By Lemma \ref{differential} the map $T$ induces a map of weight filtered complexes
\[(M_1)_b\otimes (M_2)_b\to (M_1\otimes M_2)_b.\]
We look at the map induced by $T$ on $E_1^{p,q}$ of weight spectral sequences. This is equal to the map
\[E_1^{p,q}=H^{p+q}(\text{gr}^W_{-p}((M_1\otimes M_2)\otimes AC^\bullet\otimes AC^\bullet)
\longrightarrow H^{p+q}(\text{gr}^W_{-p}(M_1\otimes M_2)\otimes AC^\bullet)\]
induced by the product $\circ$, so this is an isomorphism by Proposition
\ref{bullet} (1).  Hence the map $T$ induces an isomorphism
of  weight filtered vector spaces
\[H^0((M_1)_b\otimes (M_2)_b)\to H^0((M_1\otimes M_2)_b).\] 
We check the compatibility of $T$ and the comparison map. We need the following diagram commute.
\[
\begin{matrix}
(M_1)_b\otimes (M_2)_b&\overset{I\otimes I} \rightarrow &(M_1)_{dR}\otimes (M_2)_{dR}\otimes \C\\
T \downarrow \phantom{T}&&\text{id}\downarrow \phantom{\text{id}}\\
(M_1\otimes M_2)_b&\overset{ I}\rightarrow& (M_1\otimes M_2)_{dR}\otimes \C.
\end{matrix}
\]
This holds by Proposition \ref{bullet} (4).  
\end{proof}
Let $\si$ be an embedding of $F$ into $\C$. $\si$ defines a Hodge realization of ${\rm MT_{BK}}$.
$\si$ also defines a regulator map from  $K_{2r-1}(F)$ to $\text{Ext}^1_{MHS} (\Q(0),\,\Q(r))=\C/(2\pi i)^r\Q.$
\begin{proposition}
The following diagram commutes.
\[
\xymatrix{
K_{2r-1}(F)\otimes \Q \ar[rr]\ar[rd]^{\text{reg}}&& Ext^1_{\rm MT_{BK}}(\Q(0),\,\Q(r))\ar[ld]^{\text{real}}\\
&\text{Ext}^1_{MHS} (\Q(0),\,\Q(r))&
}
\]

\end{proposition}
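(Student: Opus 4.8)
The plan is to evaluate both composites on a class $[Z]\in K_{2r-1}(F)\otimes\Q$ coming from an alternating higher Chow cycle $Z\in Z^r(\Spec F,2r-1)^{\rm alt}$ with $\partial Z=0$, and to check that the two resulting classes in $\text{Ext}^1_{MHS}(\Q(0),\Q(r))=\C/(2\pi i)^r\Q$ coincide. Using the equivalence ${\rm MT_{BK}}\sim\text{Conn}^{0f}_{N\{1\}}$ and the identification $\text{Ext}^1_{{\rm MT_{BK}}}(\Q(0),\Q(r))\simeq H^1((N\{1\})_r)\simeq H^1(N_r^\bullet)=CH^r(\Spec F,2r-1)$ recalled in Section \ref{MT}, the top horizontal map sends $[Z]$ to the isomorphism class of the flat $N\{1\}$-connection on $M=\Q e_0\oplus\Q e_r$ ($e_0$ of Adams degree $0$ spanning the quotient $\Q(0)$, $e_r$ of Adams degree $-r$ spanning the sub $\Q(r)$) with $\Gamma$ vanishing on $\Q e_r$ and $\Gamma(e_0)=e_r\otimes\gamma$, where $\gamma$ is a cocycle in $(N\{1\}^1)_r$ whose image in $N_r^1$ represents $[Z]$ (flatness being exactly $d\gamma=0$, since $\Gamma^2=0$). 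As $\Psi(M)$ depends only on the induced $N$-connection up to gauge equivalence, we may take the relevant cocycle in $N_r^1$ to be $Z$ itself. By Theorem \ref{comparison}, $\text{real}$ applied to this class is the extension class of the mixed Tate Hodge structure $\Psi(M)$ of Definition \ref{defpsi}; since $\Psi$ is exact with $\Psi(\Q(0))=\Q_{Hg}(0)$ and $\Psi(\Q(r))=\Q_{Hg}(r)$, $\Psi(M)$ is genuinely an extension of $\Q_{Hg}(0)$ by $\Q_{Hg}(r)$.

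Next I would compute that extension class explicitly. Its value in $\C/(2\pi i)^r\Q$ is $c(w)-v\bmod(2\pi i)^r\Q$ for any de Rham lift $v\in F^0\Psi(M)_{dR}$ and any Betti lift $w\in\Psi(M)_b$ of $1$, after identifying $(\Q(r))_{dR}\otimes\C$ with $\C$. On the de Rham side $M_{dR}=M\otimes F$ has zero differential and $F^0M_{dR}=Fe_0$, so $v=e_0$. On the Betti side $M_b=M\otimes AC^\bullet$ with differential $d_1+d_2$, where $d_2(e_0\otimes 1)=e_r\otimes\iota(Z)$; since $\partial Z=0$ and a projective algebraic cycle is a topological cycle, $\iota(Z)\in AC^1$ is a $d$-cocycle ($\iota$ being a map of complexes by Proposition \ref{properties of tc}(2)), and $H^1(AC^\bullet)=0$ by Proposition \ref{properties of tc}(1), so there is $\beta\in AC^0$ with $d\beta=-\iota(Z)$. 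Then $w=[e_0\otimes 1+e_r\otimes\beta]$ is a cocycle lifting $1\in H^0(AC^\bullet)=(\Q(0))_b$. Applying the comparison map, which on $M_{(k)}\otimes AC^\bullet$ equals $\text{id}\otimes(2\pi i)^{-k}I$, gives $c(w)=e_0+(2\pi i)^rI(\beta)\,e_r$, whence $\text{real}([Z])=(2\pi i)^rI(\beta)\bmod(2\pi i)^r\Q$. This is independent of the choices: changing the cocycle representative of $[Z]$, the gauge, or $\beta$ alters $\beta$ only by a $d$-cocycle in $AC^0$, i.e. by an element of $\Q$ modulo a boundary, and $(2\pi i)^rI$ sends $\Q$ into $(2\pi i)^r\Q$.

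Finally I would invoke \cite{Ki}, where the Abel--Jacobi map on higher Chow cycles is described in terms of $AC^\bullet$: specialized to $\Spec F$ it sends $[Z]\in CH^r(\Spec F,2r-1)$ to $(2\pi i)^rI(\beta)\bmod(2\pi i)^r\Q$ with $d\beta=-\iota(Z)$, which under $K_{2r-1}(F)\otimes\Q\cong CH^r(\Spec F,2r-1)$ is precisely the map $\text{reg}$ of the statement. Comparing with the previous paragraph, the triangle commutes. The one genuinely non-formal point is this last comparison: verifying that the period integral extracted from the comparison map of Definition \ref{defpsi} is literally the integral defining the Abel--Jacobi map in \cite{Ki}, with matching $(2\pi i)$-normalizations and matching signs in the choice of the bounding chain $\beta$. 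The remaining ingredients --- passing from the $N\{1\}$-valued connection to the cocycle $Z\in N_r^1$ through the $1$-minimal-model quasi-isomorphism, the independence of $\Psi(M)$ from gauge choices, and the exactness of $\Psi$ --- are routine given Sections \ref{MT}--\ref{Hodger}.
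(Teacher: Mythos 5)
Your proposal is correct and follows essentially the same route as the paper: represent $z$ by a cycle $Z$, realize the extension as a flat $N\{1\}$-connection $\Gamma(e_0)=e_{-r}\otimes Z$, produce a chain bounding $\iota(Z)$ in $AC^\bullet$ to lift the Betti class, read off the period from the comparison map, and invoke \cite{Ki} to identify that period with the Abel--Jacobi/regulator value. The only cosmetic difference is that you derive the bounding chain $\beta$ from the vanishing $H^1(AC^\bullet)=0$ (Proposition \ref{properties of tc}(1)), whereas the paper constructs $G=\sum c_i$ by the explicit inductive $\partial$--$\delta$ staircase; these yield the same class since two such bounding chains differ by a cocycle in $AC^0$, and $I$ kills coboundaries and sends $H^0(AC^\bullet)=\Q$ into $\Q$.
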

\begin{proof}
Let $z$ be a non-zero element of $K_{2r-1}(F)\otimes \Q=CH^r(\Spec F, 2r-1)$ and $Z\in Z^r(\Spec F, 2r-1)
^{\alt}$
be a higher Chow cycle which represents $z$. The object $M\in {\rm MT_{BK}}$ which is an extension of
$\Q(0)$ by $\Q(r)$ defined by $z$ is described as follows. The cycle $Z$ defines an element $[Z]$
in $(\chi_N)_r.$ As a $\Q$-vector space
$$M=M_0\oplus M_{-r}$$ where $M_0$ resp. $M_{-r}$ is a one dimensional $\Q$-vector space of
Adams degree 0 resp. $-r$.  Let $e_0$ resp. $e_{-r}$
be a generator of $M_0$ resp. $M_{-r}$. Then the coaction of $\chi_N$ on $M$ is defined 
by the equalities 
$$\Delta e_0=e_0+e_{-r}\otimes [Z],\quad \Delta e_{-r}=e_{-r}.$$
The definition of $\Delta$ is independent of the choice of the generators up to isomorphisms.
If we look at the proof of \cite{KM} Part IV Theorem 3.7, we see that there exists a 1-minimal
model $N\{1\}$ of $N$ whose image in $N$ contains $Z$. 
As an object of $\text{Conn}^{0f}_{N\{1\}}$  the connection $\Ga$ of $M$ is defined by the equalities
\[ \Ga(e_0)=e_{-r}\otimes Z,\,\,\text{ and } \,\,\Ga(e_{-r})=0.\]
We compute the image $\Psi(M)$ under the functor $\Psi$.
By definition we have $M_b=M\otimes AC^\bullet$ and $M_{dR}=M\otimes F$. 

We show that there is an element
$G\in AC^0$ such that $dG=-Z$. $Z$ defines a homology class in
$$H_{2r-2}(P^{2r-1}, {\bold D}^{2r-1};\Q)^{\alt}\simeq H^{2r}(\square^{2r-1};\Q)^{\alt}.$$ Since 
$H^j(\square^n;\Q)^{\alt}=\{0\}$ if $n>0$, there is a chain
\newline $\ga_{2r-1}\in AC_{2r-1}(P^{2r-1}, {\bold D}^{2r-1};\Q)^{\alt}$
such that $\delta\ga_{2r-1}=Z.$
\,\,  We  have the equality 
\[\delta\part \ga_{2r-1}=\part\delta\ga_{2r-1}=\part Z=0\]
so the chain $\part \ga_{2r-1}$ defines a homology class in 
$H_{2r-3}(P^{2r-2}, {\bold D}^{2r-2};\Q)^{\alt}$ which is 0 if $2r-2>0$. Inductively one can
find chains $\ga_i\in AC_i(P^i, \bold D^i;\Q)$ for $ i=1,\cdots 2r-1$ such that
\[\part \ga_{i+1}=\delta \ga_i\quad (1\leq i\leq 2r-2)\quad \text{and}\quad
\part \ga_1=0.\]
There are elements $s(i)\in \ZZ/2\ZZ$ for $i=1,\cdots, 2r-1$ such that 
for the chains $c_i=(-1)^{s(i)}\ga_i$ one has
\[(-1)^{2r-1}\delta c_i=-Z,\quad \part c_{i+1}=-(-1)^i\delta c_i\quad (1\leq i\leq 2r-2)\quad \text{and}\quad
\part c_1=0.\]
Let $G=\sum_{i=1}^{2r-1} c_i.$  Then we have
\[\begin{aligned}
&dG\\
=&\sum_{i=1}^{2r-1} (\part+(-1)^i\delta)c_i\\
=&-Z.
\end{aligned}
\]
The elements $e_0+e_{-r}\otimes G$ and $e_{-r}$ are cocycles in $M_b$ and so
they form a basis of $H^0(M_b).$ Their images under the comparison map $c$
are
\[c(e_0+e_{-r}\otimes G)=e_0+e_{-r}(2\pi i)^r I(G)\quad\text{and} \quad
c(e_{-r})=e_{-r}(2\pi i)^r.\]
By \cite{Ki} Corollary 4.9 $I(G)$ is equal to the Abel-Jacobi image of $Z$. 
$I(G)$ is well defined modulo  integrals of the form 
$\omega_{2r-1}$ on cycles in 
$$H^{2r-1}(P^{2r-1}, \bold H^{2r-1};\Q)^{\alt}=H_{2r-1}(P^{2r-1}-\bold H^{2r-1};\Q)^{\alt}$$
where $ \bold H^{2r-1}$ is the union  of the faces of $P^{2r-1}.$
So it is well defined modulo $\QQ$. Hence $(2\pi i)^r I(G)$ defines an element
in $\C/(2\pi i)^r \Q$ which is the regulator of the element $z$.

\end{proof}

\section{The Hodge realization of Polylog motives}
\label{polylog}
We recall the definition of Polylog motives from \cite{BK}.
Let $a$ be an element of $F-\{0,1\}$ and $k$ be a positive integer.
In \cite{B2} Bloch defines an element 
$\rho_k(a)\in Z^k(\Spec F, 2k-1)^{\alt}=N^1_k$ described as follows.
It is $(-1)^{\frac{k(k-1)}2}$ times the locus in $\square^{2k-1}$ paremetrized in inhomogenious coordinates
by
\[(x_1,\cdots, x_{k-1}, 1-x_1, 1-\frac{x_2}{x_1},\cdots, 1-\frac{x_{k-1}}{x_{k-2}}, 1-\frac{a}{x_{k-1}})\]
where $x_1,\cdots x_{k-1}\in \square^1.$
In this section we omit the alternation symbol.   Then one has 
\[\part\rho_k(a)=-(a)\cdot \rho_{k-1}(a)\]
and 
\[[\rho_k(a)]+[(a)|\rho_{k-1}(a)]+[(a)|(a)|\rho_{k-2}(a)]+\cdots +[(a)|\cdots |(a)|\rho_1(a)]\]
is a cocycle in $B(N)$ of degree 0. So it defines an element $\bold{Li}_k(a)$
of $(\chi_N)_k$. We define the comodule $M_k(a)$ of $\chi_N$ as follows.
As an Adams graded $\Q$-vector space 
$$ M_k(a)=\bigoplus_{j=0}^k \Q e_{-j}$$ where $e_{-j}$
is a generator of the Adams degree $(-j)$-subspace of $M_k(a).$  The coaction $\Delta$ is defined by the equalities
\[\Delta e_0=e_0\otimes 1+e_{-1}\otimes \bold{Li_1}(a)+e_{-2}\otimes \bold{Li}_2(a)+
\cdots +e_{-k}\otimes \bold{Li}_k(a)\]
and 
\[\Delta e_{-j}=e_{-j}\otimes 1+e_{-j-1}\otimes[ (a)]+e_{-j-2}\otimes [(a)|(a)]
+\cdots +e_{-k}\otimes [(a)|\cdots |(a)]\]
for $1\leq j\leq k$. 
Up to shifting the Adams grading by $-k$, $M_k(a)$ is isomorphic to the sub comodule in $\chi_N$ generated by
\[e_0=\bold{Li}_k(a),\,\,\text{ and } e_{-j}=[(a)|\cdots |(a)]=(k-j)\text{-th power of }(a)
\text{ for } 1\leq j\leq k.\]
Let $[a]$ be the extension of $\Q$ by $\Q(1)$ defined by the element 
$$a\in CH^1(\Spec F, 1)=\text{Ext}^1_{\rm MT_{BK}}(\Q,\Q(1)).$$
Then $M_k(a)$ is an extension
of $\Q$ by $\text{Sym}^{k-1} ([a])(1)$, and $M_1(a)=[1-a].$ 
We first compute the image $\Psi(M_k(a))$ under the functor $\Psi$. For this it is easier
to assume that the elements $a$ and $1-a$ are linearly independent in
$H^1(N_1)=CH^1(\Spec F, 1)=F^\times\otimes \Q$. The reason is as follows.
If we look at the construction of a 1-minimal model  $N\{1\}$ of $N$
which is explained for example in the proof of \cite{KM} Part IV Theorem 3.7, 
we see under the above assumption that there exists a 1-minimal model 
$N\{1\}$ of $N$ whose image in $N$  contains $a$, $1-a$ and $\rho_k(a)$ for $k\geq 2$.
Otherwise we will compute 
$\Phi(M_k(a))$ later. As an object of $\text{Conn}^{0f}_{N\{1\}}$ the connection
$\Ga$ of $M_k(a)$ is defined as follows.
\[
\begin{aligned}
\Ga(e_0)=&e_{-1}\otimes \rho_1(a)+e_{-2}\otimes \rho_2(a)+\cdots +
e_{-k}\otimes \rho_k(a),\\
\Ga(e_{-j})=&e_{-j-1}\otimes (a)\,\,(1\leq j\leq k-1),\quad \Ga(e_{-k})=0.
\end{aligned}
\] 
We define the chains $\eta_k(i)\in AC_{k+i}(P^{k+i},\bold D^{k+i};\Q)^{\alt}$
for $0\leq i\leq k-1$. Originally $\eta_k(i)$ are defined  in \cite{BK}.
Consider the chain defined as  the locus of
\[
\begin{aligned}
&(x_1,\cdots, x_i,t_{i+1},\cdots, t_{k-1}, 1-x_1,\cdots, 1-\frac{x_i}{x_{i-1}},1-\frac{t_i}{x_i}),\\
&t_{k-1}\in [0,a], t_{k-2}\in [0,t_{k-1}],\cdots ,t_i\in [0, t_{i+1}],\\
&x_1,\cdots, x_i\in \PP^1(\C).
\end{aligned}
\]
Here more precisely one chooses a path $\ga$ from 0 to $a$ in $\PP^1$,
and $t_j$ should be denoted by $\ga(t_j)$. The orientation of this chain is defined so that $(t_i,t_{i+1},\cdots, t_{k-1})$ are positive  real coordinates. We define $\eta_k(i)$ 
 as $(-1)^{\frac{(k-1)(k-2)}2+i}$ times this chain.
We have the following identities.
\begin{equation}
\label{boundary}
\begin{aligned}
&(-1)^{2k-1}\delta\eta_k(k-1)=-\rho_k(a),\\
&(-1)^{k+i}\delta\eta_k(i)=-\part \eta_k(i+1)-(a)\cdot \eta_{k-1}(i)\quad (0\leq i\leq k-2 \text{ and } k\geq 2),\\
&\part \eta_k(0)=0.
\end{aligned}
\end{equation}
Note that all the boundary terms except $t_i=t_{i+1}$ and $t_{k-1}=a$ vanish
by alternation.
Let $\xi_k(a)\in AC^0$ be the sum $\dis \sum_{i=0}^{k-1}\eta_k(i).$
By (\ref{boundary}) we have
\[
\begin{aligned}
d\xi_k(a)=&-\rho_k(a)-(a)\cdot \xi_{k-1}(a)\,\, \text{ for }\,k\geq 2,\\
 d\xi_1(a)=&-\rho_1(a).
 \end{aligned}\]
 Let $Z_k(a)$ be the element 
 \[e_0\otimes 1+e_{-1}\otimes \xi_1(a)+e_{-2}\otimes \xi_2(a)+
 \cdots +e_{-k}\otimes \xi_k(a)\]
 of $(M_k(a))_b$ of cohomological degree 0. Then we have the equality
 \[\begin{aligned}
 &dZ_k(a)\\
 =&de_0\otimes 1+\sum_{j=1}^{k}d(e_{-j}\otimes \xi_j(a))\\
 =& \sum_{j=1}^ke_{-j}\otimes \rho_j(a)\\
 +&
 \sum_{j=1}^{k}e_{-j-1}\otimes (a)\cdot \xi_j(a)+e_{-j}\otimes
 (-\rho_j(a)-(a)\cdot \xi_{j-1}(a))\,\,(\text{ where } \,\,e_{-k-1}=0 \text{ and }\,\,\xi_0(a)=0)\\
 =&0.
 \end{aligned}
 \]
 So $Z_k(a)$ is an element of $H^0((M_k(a))_b).$  Let $p$ be a path on $\PP^1(\C)$
 from 1 to $a$ which avoids 0. $p$  defines an element of $AC^0.$
 Consider the chains $p^{\circ j}$ for $j\geq 0$.  $p^{\circ 0}=1$ by convention.
 By Proposition \ref{bullet} we see by induction on $j$ that
 \[d(p^{\circ j})=j dp\circ p^{\circ(j-1)}=-j (a)\cdot p^{\circ( j-1)}\,\,(\text{ where } \, p^{\circ( -1)}=0).\]
 For $j=1,\cdots, k$ let $L_j(a)$ be the element 
 $\dis \sum_{t=j}^k e_{-t}\otimes \frac1{(t-j)!}
 p^{\circ (t-j)}$
 of $(M_k(a))_b$ of cohomological
 degree 0.  We see that
 \[
 \begin{aligned}
 & dL_j(a)\\
 =&\sum_{t=j}^ke_{-t-1}\otimes \frac1{(t-j)!}(a)\cdot p^{\circ(t-j)}
 -e_{-t}\otimes \frac1{(t-j-1)!}(a)\cdot p^{\circ(t-j-1)}\\
& (\text{where } \,e_{-k-1}=0\,\,\text{ and }\,\,p^{\circ (-1)}=0)\\
=&0.
 \end{aligned}\]
 So $L_j(a)$ is an element of $H^0((M_k(a))_b)$.  We see $Z_k(a)$
 and $L_j(a)$ for $1\leq j\leq k$ is a basis of $H^0((M_k(a))_b)$.
 We compute $c(Z_k(a))$.  The integral 
 $\dis \int_{\eta_k(i)}\omega_{k+i}=0$ for $i>0$ for reasons of type,
 and
 \[
 \begin{aligned}
& I(\eta_k(0))\\
=&(-1)^{\frac{k(k-1)}2+\frac{(k-1)(k-2)}2}\frac1{(2\pi i)^k}
\int_{0\leq t_0\leq t_1\leq \cdots \leq t_{k-1}\leq a}
\frac{dz_1}{z_1}\wedge \cdots \wedge \frac{dz_k}{z_k}\\
=&(-1)^{\frac{k(k-1)}2+\frac{(k-1)(k-2)}2+k-1}\frac1{(2\pi i)^k}
\int_{0\leq t_0\leq t_1\leq \cdots \leq t_{k-1}\leq a}
\frac{d(1-t_0)}{1-t_0}\frac{dt_1}{t_1}\cdots  \frac{dt_{k-1}}{t_{k-1}}
\\
=&\frac{-1}{(2\pi i)^k}Li_k(a).
\end{aligned}
\]
It follows that
\[c(Z_k(a))=e_0-\sum_{j=1}^k e_{-j}Li_j(a).\]
Similarly  we see that
\[
\begin{aligned}
c(L_j(a))\\
=&\sum_{t=j}^k e_{-t} \frac{(2\pi i)^t}{(t-j)!}
 I(p^{\circ (t-j)})\\
 =& \sum_{t=j}^k e_{-t} \frac{(2\pi i)^t}{(t-j)!}
 \frac{(\log a)^{t-j}}{(2\pi i)^{t-j}}\\
=& \sum_{t=j}^k e_{-t} \frac{(2\pi i)^j}{(t-j)!}(\log a)^{t-j}.
\end{aligned}
\]  The period matrix of $\Psi(M_k(a))$  for the de Rham basis
$e_0,\cdots, e_{-k}$ is equal to 
\[
\left(
\begin{array} {ccccc}
1&&&&\\
-Li_1(a)& 2\pi i &&&\\
-Li_2(a)& 2\pi i\log a& (2\pi i)^2&&\\
\vdots& 2\pi i \frac{(\log a)^2} {2!}& (2\pi i)^2\log a&\\
\vdots& \vdots &&\ddots&\\
-Li_k(a)&\cdots &\cdots && (2\pi i)^k
\end{array}
\right).
\]
As for the image $\Phi(M_k(a))$ of the functor $\Phi$, we can take
\[
\begin{aligned}
&\Delta Z_k(a)\\
=&\Delta(e_0)\otimes 1+\Delta(e_{-1})\otimes \xi_1(a)+\Delta(e_{-2})\otimes \xi_2(a)+
 \cdots +\Delta(e_{-k})\otimes \xi_k(a)
 \end{aligned}
 \]
 and
 $$\Delta(L_j(a))=\sum_{t=j}^k \Delta(e_{-t})\otimes \frac1{(t-j)!}
 p^{\circ (t-j)}$$ for $j=1,\cdots, k$ as the basis of the betti part.
 The basis of the de Rham part is $\Delta(e_{-j})$  for $j=0,\cdots, k$.
 The period matrix is the same as that of $\Psi(M_k(a)).$

\vskip 0.5cm

\noindent Kenichiro Kimura

\vskip 0.1cm 

\noindent Department of Mathematics,

\noindent University of Tsukuba,

\noindent 1-1-1 Tennodai,

\noindent Tsukuba, Ibaraki,

\noindent 305-8571

\noindent Japan.

\printindex

\end{document}